\newtheorem{tm}{Theorem}[section]
\newtheorem{defin}[tm]{Definition} 
\newtheorem{exmp}[tm]{Example}
\newtheorem{coro}[tm]{Corollary}
\newtheorem{lem}[tm]{Lemma}
\newtheorem{assumption}[tm]{Assumption}
\newtheorem{rk}[tm]{Remark}
\numberwithin{equation}{section}
\begin{document}


\title{Stability of stochastic differential equation driven by time-changed L\'evy noise}

\author{ERKAN NANE}
\address{Department of Mathematics and Statistics,
Auburn University,
Auburn, AL 36849 USA}
\email{ezn@auburn.edu}

\author{YINAN NI}
\address{Department of Mathematics and Statistics,
Auburn University,
Auburn, AL 36849 USA}
\email{yzn0005@auburn.edu}



\begin{abstract}
This paper studies stabilities of stochastic differential equation (SDE) driven by time-changed L\'evy noise in both probability and moment sense. This provides more flexibility in modeling schemes in application areas including physics, biology, engineering, finance and hydrology. Necessary conditions for solution of time-changed SDE to be stable in different senses will be established. Connection between stability of solution to time-changed SDE and that to corresponding original SDE will be disclosed. Examples related to different stabilities will be given. We study SDEs with time-changed L\'evy noise, where the  time-change processes  are inverse of general L\'evy subordinators. These results are important improvements of the results in Wu \cite{qwa}.

\end{abstract}


\maketitle


\section{Introduction}
It has been a long time since stochastic differential equations (SDEs) started being applied in various areas, including biology \cite{such}, physics \cite{dope}, engineering \cite{sob}, finance \cite{jijo}. SDEs are taken as important tools in modeling and simulating real phenomena, the stability of SDEs has been studied widely by mathematicians in different senses, such as stochastically stable, stochastically asymptotically stable, moment exponentially stable, almost surely stable, mean square polynomial stable, see \cite{apsia, foondun,sia,qwb}. A systematic introduction of stabilities is provided by Mao in \cite{maotext}.

During last few decades, time-changed SDEs attracted lots of attention and became one of the most active areas in stochastic analysis and many applied areas of science. Their probability density functions provide solutions to fractional Fokker-Planck equations of different kinds, see \cite{maer,erni}, which are also very important in modeling and describing phenomena in applied areas, see \cite{jber}.

In \cite{keib} Kobayashi discussed relationship between time-changed SDEs
\begin{equation}\label{equk}
\begin{aligned}
&dX(t)=f(E_t,X(t-))dE_t+g(E_t,X(t-))dZ_{E_t},\\
& X(0)=x_0,\\
\end{aligned}
\end{equation}
and the corresponding non-time-changed SDEs
\begin{equation}\label{equk2}
\begin{aligned}
&dY(t)=f(t,Y(t-))dt+g(t,Y(t-))dZ_t,\\
& Y(0)=x_0,\\
\end{aligned}
\end{equation}
where $Z_t$ is an $\mathcal{F}_t$-semimartingale and $E_t$ is an inverse of a right continuous with left limit (RCLL) nondecreasing process $\{D(t),t \geq 0\}$: if a process $Y(t)$ satisfies SDE \eqref{equk2}, then  $X(t):=Y(E_t)$ satisfies the time-changed SDE \eqref{equk}; if a process $X(t)$ satisfies the time-changed SDE \eqref{equk}, then $Y(t):=X(D(t))$ satisfies SDE \eqref{equk2}.

Kobayashi also studied It\^o formula driven by time-changed SDE which is provided under certain conditions as below,
\begin{equation}
\begin{aligned}
f(X_t)-f(x_0)=&\int_0^tf'(X_{s-})A_sds+\int_0^{E_t}f'(X_{D(s-)-})F_{D(s-)}ds\\
&+\int_0^{E_t}f'(X_{D(s-)-})G_{D(s-)}dZ_s\\
&+\frac{1}{2}\int_0^{E_t}f''(X_{D(s-)-})\{G_{D(s-)}\}^2d[Z,Z]_s^c\\
&+\sum_{0<s\leq t}\{f(X_s)-f(X_{s-})-f'(X_{s-})\Delta X_s\},
\end{aligned}
\end{equation}
where $f: \mathbb{R}\rightarrow \mathbb{R}$ is a $C^2$ function.

In light of time-changed It\^o formula, recent paper \cite{qwa} analyzes the SDE driven by time-changed Brownian motion
\begin{equation}\label{equw}
\begin{aligned}
&dX(t)=k(t,E_t,X(t-))dt+f(t,E_t,X(t-))dE_t+g(t,E_t,X(t-))dB_{E_t},\\
& X(0)=x_0,\\
\end{aligned}
\end{equation}
where $E_t$ is specified as an inverse of a stable subordinator of index $\beta$ in $(0,1)$, and discusses the stability of solution to above SDE in probability sense, including stochastically stable, stochastically asymptotically stable and globally stochastically asymptotically stable.

Main result of this paper is to provide necessary conditions for solutions of SDEs driven by time-changed L\'evy noise to be stable not only in probability sense but also in moment sense. Our results improve the results of \cite{qwa} in two respects. Firstly, we study SDEs with time-changed L\'evy noise. Secondly, we work with time-change processes that are inverse of general L\'evy subordinators.

In the remaining parts of this paper, further needed concepts and related background will be given in the preliminary section.  In the main result section, necessary conditions for solution of time-changed SDEs to be stable in different senses will be given. Connections between stability of solution to time-changed SDE and that to corresponding original SDE will be disclosed and some examples will be given. Last section will show proofs of theorems mentioned in main result section.

\section{Preliminaries}

Let $(\Omega, \mathcal{F},(\mathcal{F}_t)_{t\geq 0},P)$ be a filtered probability space satisfying usual hypotheses of completeness and right continuity. Let $\mathcal{F}_t$-adapted Poisson random measure $N$ defined on $\mathbb{R}_+\times (\mathbb{R}-\{0\})$ with compensator $\tilde{N}$ and intensity measure $\nu$, where $\nu$ is a L\'evy measure such that $\tilde{N}(dt,dy)=N(dt,dy)-\nu(dy)dt$ and $\int_{\mathbb{R}-\{0\}}(|y|^2 \land 1)\nu(dy)<\infty$.

Let $\{D(t),t\geq 0\}$ be a RCLL increasing L\'evy process that is called subordinator starting from 0 with Laplace transform
\begin{equation}
\mathbbm{E}e^{-\lambda D(t)}=e^{-t\phi(\lambda)},
\end{equation}
where Laplace exponent $\phi(\lambda)=\int_0^\infty(1-e^{-\lambda x})\nu(dx)$.

Define its inverse
\begin{equation}\label{invstable}
E_t:=\inf\{ \tau>0: D(\tau)>t\}.
\end{equation}

This paper focuses on different stabilities of the following SDE:
\begin{equation}\label{SDE}
\begin{aligned}
dX(t)&=f(t, E_t, X(t-))dt+k(t, E_t, X(t-))dE_t+g(t, E_t, X(t-))dB_{E_t}\\
&+\int_{|y|<c}h(t, E_t, X(t-),y)\tilde{N}(dE_t,dy),
\end{aligned}
\end{equation}
with $X(0)=x_0$, where $f,k,g,h$ are real-valued functions satisfying the following Lipschitz condition \ref{lip} and assumption \ref{tec}
such that there exists a unique $\mathcal{G}_t=\mathcal{F}_{E_t}$ adapted process $X(t)$ satisfying time changed SDE \eqref{SDE}, see Lemma 4.1 in \cite{keib}.


\begin{assumption}\label{lip}
(Lipschitz condition)
There exists a positive constant K such that
\begin{equation}
\begin{aligned}
&\Big|f(t_1,t_2,x)-f(t_1,t_2,y)\Big|^2+\Big|k(t_1,t_2,x)-k(t_1,t_2,y)\Big|^2+\Big|g(t_1,t_2,x)-g(t_1,t_2,y)\Big|^2\\
&+\int_{|z|<c}\Big|h(t_1,t_2,x,z)-h(t_1,t_2,x,z)\Big|^2\nu(dz)\leq K|x-y|^2,
\end{aligned}
\end{equation}
for all $t_1,t_2\in \mathbb{R}_+$ and $x,y\in \mathbb{R}$.
\end{assumption}

\begin{assumption}\label{tec}
If $X(t)$ is a RCLL and $\mathcal{G}_t$-adapted process, then
\begin{equation}
f(t, E_t, X(t)), k(t, E_t, X(t)), g(t, E_t, X(t)),h(t, E_t, X(t),y)\in \mathcal{L}(\mathcal{G}_t),
\end{equation}
where  $\mathcal{L}(\mathcal{G}_t)$ denotes the class of RCLL and $\mathcal{G}_t$-adapted processes.
\end{assumption}

Next we give definitions of different stabilities of SDE \eqref{SDE}.

\begin{defin}
(1) The trivial solution of the time-changed SDE \eqref{SDE} is said to be stochastically stable or stable in probability if for every pair of $\epsilon \in (0,1)$ and $r>0$, there exists a $\delta=\delta(\epsilon, r)>0$ such that
\begin{equation}
P\{|X(t,x_0)|<r\ for\ all\ t \geq 0\}\geq 1-\epsilon
\end{equation}
whenever $|x_0|<\delta$.\\

(2) The trivial solution of the time-changed SDE \eqref{SDE} is said to be stochastically asymptotically stable if for every $\epsilon \in (0,1)$, there exists a $\delta_0=\delta_0(\epsilon)>0$ such that
\begin{equation}
P\{\lim_{t\rightarrow \infty}X(t,x_0)=0\}\geq 1-\epsilon
\end{equation}
whenever $|x_0|<\delta_0$.\\

(3) The trivial solution of the time-changed SDE \eqref{SDE} is said to be globally stochastically asymptotically stable or stochastically asymptotically stable in the large if it is stochastically stable and for all $x_0 \in \mathbb{R}$
\begin{equation}
P\{\lim_{t\rightarrow \infty}X(t,x_0)=0\}=1.
\end{equation}
\end{defin}

\begin{defin}
(1) The trivial solution of the time-changed SDE \eqref{SDE} is said to be pth moment exponentially stable if
there are positive constants $\lambda$ and C such that
\begin{equation}
E[|X(t)|^p]\leq C|x_0|^p\exp(-\lambda t),\ \forall t\geq 0,\ \forall x_0\in \mathbb{R},\ p>0.
\end{equation}

(2) The trivial solution of the time-changed SDE \eqref{SDE} is said to be pth moment asymptotically stable if
there is a function $v(t):[0,+\infty)\rightarrow [0,\infty)$ decaying to 0 as $t\rightarrow \infty$ and a positive constant C such that
\begin{equation}
E[|X(t)|^p]\leq C|x_0|^p v(t),\ \forall t\geq 0,\ \forall x_0\in \mathbb{R},\ p>0.
\end{equation}
\end{defin}

Let $\mathcal{K}$ denote the family of all nondecreasing functions $\mu:\mathbb{R}_+\rightarrow \mathbb{R}_+$ such that $\mu(r)>0$ for all $r>0$. Also let $S_h=\{x\in \mathbb{R}: |x|<h\}$  and $\bar{S_h}=\{x\in \mathbb{R}: |x| \leq h\}$ for all $h>0$.

\section{Main results}
In this section, time-changed It\^o formula driven by SDE \eqref{SDE} will be given, then necessary conditions for different stabilities will be established, followed by some examples.

The next lemma is a version of the It\^o formula in Corollary 3.4 in \cite{keib}.

\begin{lem}({It\^o formula for time-changed L\'evy noise})\label{itofor}
Let $D(t)$ be a RCLL subordinator and $E_t$ its inverse process as \eqref{invstable}. Define a filtration $\{\mathcal{G}_t\}_{t\geq 0}$ by $\mathcal{G}_t=\mathcal{F}_{E_t}$. Let $X$ be a process defined as following:
\begin{equation}
\begin{aligned}\label{sdelevy}
X(t)&=x_0+\int_0^tf(t, E_t, X(t-))dt+\int_0^tk(t, E_t, X(t-))dE_t+\int_0^tg(t, E_t, X(t-))dB_{E_t}\\
&+\int_0^t\int_{|y|<c}h(t, E_t, X(t-),y)\tilde{N}(dE_t,dy),
\end{aligned}
\end{equation}
where $f,k,g,h$ are measurable functions such that all integrals are defined. Here $c$ is the maximum allowable jump size.\\
Then, for all $F : \mathbb{R}_+\times \mathbb{R}_+\times \mathbb{R}\rightarrow \mathbb{R}$ in $C^{1,1,2}(\mathbb{R}_+\times \mathbb{R}_+\times \mathbb{R},\mathbb{R})$, with probability one,
\begin{equation}
\begin{aligned}\label{itolevy}
F(t, E_t, &X(t))-F(0,0,x_0)=\int_0^t L_1F(s, E_s, X(s-))ds+\int_0^t L_2F(s, E_s, X(s-))dE_s\\
&+\int_0^t\int_{|y|<c}\Big[F(s, E_s, X(s-)+h(s, E_s, X(s-),y))-F(s, E_s, X(s-))\Big]\tilde{N}(dE_s,dy)\\
&+\int_0^t F_x(s, E_s, X(s-))g(s, E_s, X(s-))dB_{E_s},
\end{aligned}
\end{equation}
where
\begin{equation}
\begin{aligned}\label{linearop}
L_1F(t_1,&t_2,x)=F_{t_1}(t_1,t_2,x)+F_{x}(t_1,t_2,x)f(t_1,t_2,x),\\
L_2F(t_1,&t_2,x)=F_{t_2}(t_1,t_2,x)+F_{x}(t_1,t_2,x)k(t_1,t_2,x)+\frac{1}{2}g^2(t_1,t_2,x)F_{xx}(t_1,t_2,x)\\
+&\int_{|y|<c}\Big[F(t_1,t_2,x+h(t_1,t_2,x,y))-F(t_1,t_2,x)-F_x(t_1,t_2,x)h(t_1,t_2,x,y)\Big]\nu(dy).
\end{aligned}
\end{equation}
\end{lem}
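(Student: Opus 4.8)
The plan is to derive the time-changed Itô formula \eqref{itolevy} from the known Itô formula for time-changed semimartingales (Corollary 3.4 in \cite{keib}, already recalled in the Introduction) by a change-of-variables argument passing between the clocks $t$ and $E_t$. The key observation, due to Kobayashi, is that if $X$ solves the time-changed SDE \eqref{sdelevy}, then the ``un-time-changed'' process $Y(t):=X(D(t))$ solves the corresponding SDE driven by the ordinary semimartingale $(B,\tilde N)$ with the clock $D(t)$ replaced by $t$; conversely $X(t)=Y(E_t)$. So first I would write down the SDE satisfied by $Y$, namely
\begin{equation}
\begin{aligned}
Y(t)=x_0&+\int_0^t k(D(s-),s,Y(s-))\,ds+\int_0^t f(D(s-),s,Y(s-))\,dD(s)\\
&+\int_0^t g(D(s-),s,Y(s-))\,dB_s+\int_0^t\int_{|y|<c} h(D(s-),s,Y(s-),y)\,\tilde N(ds,dy),
\end{aligned}
\end{equation}
and note that $D$ is a pure-jump subordinator, so $[D,D]^c\equiv 0$, $D$ contributes only through its jumps, and $[B,D]=0$ since $B$ is independent continuous.

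Next I would apply the classical Itô formula for semimartingales with jumps to $F(D(t-),t,Y(t))$ — here the first slot is the finite-variation process $D$, the second is the deterministic clock $t$, the third is the jump-diffusion $Y$. This produces: a $ds$-term $F_{t_2}$ coming from the deterministic time variable; an $F_{t_1}\,dD$ term together with the $F_{t_1 t_2}$, $F_{t_1 x}$ cross terms — but since $D$ is quadratic-pure-jump and every jump of $D$ is simultaneous with a jump of $Y$ (both governed by the subordinated clock), these finite-variation contributions of $F_{t_1}$ can be absorbed into the jump sum $\sum[F(\cdot)-F(\cdot-)]$, which is exactly how Kobayashi's Corollary 3.4 is phrased; the continuous martingale part gives $F_x g\,dB_s$ and the quadratic-variation term $\tfrac12 F_{xx}g^2\,ds$; and the compensated Poisson integral gives the $\tilde N(ds,dy)$ term plus its compensator. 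Collecting the $ds$-terms of the ``slow'' clock into $L_2F$ evaluated along $Y$ and keeping the genuinely deterministic-time derivative $F_{t_1}+F_x f$ as $L_1F$, one gets the formula for $F(D(t-),t,Y(t))-F(0,0,x_0)$ with $ds$, $dD(s)$, $dB_s$, $\tilde N(ds,dy)$ integrators.

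Finally I would time-change back: replace $t$ by $E_t$ throughout and use the standard change-of-variable rules for the inverse subordinator — $Y(E_t)=X(t)$, $D(E_t-)\le t$ with $D(E_s-)$ replaced by $s$ under the substitution, $\int_0^{E_t}(\cdots)\,ds=\int_0^t(\cdots)\,dE_s$ for the $ds$-integrals that become $dE_s$-integrals, $\int_0^{E_t}(\cdots)\,dD(s)=\int_0^t(\cdots)\,ds$ since $D(E_t)$ essentially recovers $t$ on the range of $D$, $B_{E_t}$ for the Brownian integral, and $\tilde N(dE_s,dy)$ for the jump integral. This is precisely the content of the associativity/duality lemmas in \cite{keib}. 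After the substitution the term $\int_0^t L_1F(s,E_s,X(s-))\,ds$ and $\int_0^t L_2F(s,E_s,X(s-))\,dE_s$ emerge with $L_1,L_2$ as defined in \eqref{linearop}, the Brownian term becomes $\int_0^t F_x g\,dB_{E_s}$, and the jump term becomes the compensated integral in \eqref{itolevy} with its compensator reabsorbed into $L_2F$ via the $\nu(dy)$-integral; the almost-sure statement is inherited from the pathwise nature of the Itô formula and the a.s. finiteness of $E_t$.

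The main obstacle I anticipate is bookkeeping the role of the two ``time'' arguments $t_1,t_2$ of $F$ correctly: in \eqref{sdelevy} both the literal time $t$ and the time-change $E_t$ appear as arguments, so after setting $Y(t)=X(D(t))$ the first argument becomes the \emph{random} finite-variation process $D(t-)$ while the second becomes deterministic $t$, and one must be careful that the cross-derivatives $F_{t_1 t_2}$, $F_{t_1 x}$ and the jumps $\Delta D$ are handled consistently — in particular verifying that $C^{1,1,2}$ regularity suffices (only first derivatives in the two time variables, because $D$ and $t$ are of finite variation and contribute no quadratic-variation terms of their own, while the second $x$-derivative is needed for the Brownian quadratic variation). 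Matching the jump/compensator structure so that exactly the $F(\cdot+h)-F(\cdot)-F_x h$ combination lands inside the $\nu(dy)$-integral in $L_2F$, with the leftover $F(\cdot+h)-F(\cdot)$ appearing against $\tilde N(dE_s,dy)$, is the one genuinely delicate reconciliation; everything else is a direct translation of Corollary 3.4 of \cite{keib} to the present notation.
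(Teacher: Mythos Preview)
Your route via the duality $Y(t)=X(D(t))$ is unnecessarily circuitous and contains a genuine gap in the treatment of the $dt$-term. The paper's proof is a one-line direct application of the multidimensional It\^o formula (Corollary~3.4 in \cite{keib}) to the three-dimensional semimartingale $(t,E_t,X(t))$: the first coordinate contributes $F_{t_1}\,ds$, the second $F_{t_2}\,dE_s$, the third the usual $F_x\,dX$ and $\tfrac12 F_{xx}\,d[X,X]^c$ plus the jump corrections; regrouping integrators yields $L_1F$, $L_2F$, the Brownian term and the compensated-jump term. No un-time-changing is needed.

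Your detour runs into trouble at the very first step. The Kobayashi duality (Theorem~4.2 in \cite{keib}) covers SDEs driven only by $dE_t$ and $dZ_{E_t}$; the extra $\int_0^t f(s,E_s,X(s-))\,ds$ term in \eqref{sdelevy} does not transform into $\int_0^t f(D(s-),s,Y(s-))\,dD(s)$ under $t\mapsto D(t)$ as you wrote. Indeed, composing with $D$ gives $\int_0^{D(t)} f(u,E_u,X(u-))\,du$, and when $D$ jumps at $s$ this picks up the full Lebesgue integral $\int_{D(s-)}^{D(s)} f(u,s,X(u-))\,du$ over the plateau of $E$; during that plateau $X$ is still moving (driven by the $dt$-term), so the integrand is not constant and the expression is not $f(D(s-),s,Y(s-))\,\Delta D(s)$. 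Consequently your displayed SDE for $Y$ is wrong in general, and all the subsequent bookkeeping of $dD$-terms, cross-variations, and the back-substitution $\int_0^{E_t}(\cdots)\,dD(s)=\int_0^t(\cdots)\,ds$ inherits this error. The fix is simply not to un-time-change at all: apply Kobayashi's It\^o formula directly to $F(t,E_t,X(t))$, exactly as the paper does.
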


\begin{proof}
This proof is a direct application of multidimensional It\^o formula, which is established in Corollary 3.4 in \cite{keib}, to $F(t, E_t, X(t))$ in $C^{1,1,2}(\mathbb{R}_+\times \mathbb{R}_+\times \mathbb{R},\mathbb{R})$. \\
\begin{equation}
\begin{aligned}
F(t, &E_t, X(t))-F(0,0,x_0)=\int_0^t F_{t_1}(s, E_s, X(s-))ds+\int_0^t F_{t_2}(s, E_s, X(s-))dE_s\\
&+\int_0^t F_x(s, E_s, X(s-))\Big[f(s, E_s, X(s-))ds+k(s, E_s, X(s-))dE_s\\
&\hfill+g(s, E_s, X(s-))dB_{E_s}\Big]+\frac{1}{2}\int_0^t F_{xx}(s, E_s, X(s-))g(s, E_s, X(s-))dE_s\\
&+\int_0^t\int_{|y|<c}\Big[F(s, E_s, X(s-)+h(s, E_s, X(s-),y))-F(s, E_s, X(s-))\Big]\tilde{N}(dE_s,dy)\\
\end{aligned}
\end{equation}
\begin{equation*}
\begin{aligned}
&+\int_0^t\int_{|y|<c}\Big[F(s, E_s, X(s-)+h(s, E_s, X(s-),y))-F(s, E_s, X(s-))\\
&\hspace{5.8cm}-F_x(s, E_s, X(s-))h(s, E_s, X(s-),y)\Big]\nu(dy)dE_s\\
\end{aligned}
\end{equation*}
\begin{equation*}
\begin{aligned}
&=\int_0^t L_1F(s, E_s, X(s-))ds+\int_0^t L_2F(s, E_s, X(s-))dE_s\\
&+\int_0^t\int_{|y|<c}\Big[F(s, E_s, X(s-)+h(s, E_s, X(s-),y))-F(s, E_s, X(s-))\Big]\tilde{N}(dE_s,dy)\\
&+\int_0^t F_x(s, E_s, X(s-))g(s, E_s, X(s-))dB_{E_s}.
\end{aligned}
\end{equation*}
\end{proof}

\begin{lem}
Let $D(t)$ be a RCLL subordinator and $E_t$ be its inverse process as in \eqref{invstable}. Define a filtration $\{\mathcal{G}_t\}_{t\geq 0}$ by $\mathcal{G}_t=\mathcal{F}_{E_t}$. Let $\tilde{N}$ be a compensated Poisson measure defined on $\mathbb{R}_+\times (\mathbb{R}-\{0\})$ with intensity measure $\nu$, where $\nu$ is a L\'evy measure such that $\tilde{N}(dt,dy)=N(dt,dy)-\nu(dy)dt$ and $\int_{\mathbb{R}-\{0\}}(|y|^2 \land 1)\nu(dy)<\infty$. Then, for any $A\in\mathcal{B}(\mathbb{R}-\{0\})$ bounded below, time-changed process $\tilde{N}(E_t, A)$ is a martingale.\\
\end{lem}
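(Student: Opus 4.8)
The plan is to reduce the claim to the well-known fact that the (non-time-changed) compensated Poisson measure $\tilde N(t,A)=N((0,t]\times A)-t\,\nu(A)$ is an $\mathcal F_t$-martingale whenever $\nu(A)<\infty$, which holds for every $A\in\mathcal B(\mathbf R-\{0\})$ bounded below. Write $M_t:=\tilde N(t,A)$ for this martingale; it is RCLL, has integrable (indeed square-integrable) increments, and $E[M_t]=0$ for all $t$. The time-changed object in the statement is $M_{E_t}=\tilde N(E_t,A)$, and we must show it is a martingale with respect to the filtration $\mathcal G_t=\mathcal F_{E_t}$.

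The key structural point is that $E_t$ is the inverse of the subordinator $D$, hence $\{E_t\le s\}=\{D(s)\ge t\}\in\mathcal F_s$, so for each fixed $t$ the random time $E_t$ is an $\{\mathcal F_s\}$-stopping time; moreover $t\mapsto E_t$ is continuous and nondecreasing, and each $E_t$ is almost surely finite and bounded (since $D$ increases to $\infty$). First I would invoke the optional stopping theorem for the martingale $M$ at the bounded stopping time $E_t$ to get that $M_{E_t}$ is integrable with $E[M_{E_t}]=0$, and that $\mathcal G_t=\mathcal F_{E_t}$ is a bona fide filtration (monotonicity in $t$ follows from $E_s\le E_t$ for $s\le t$, together with the usual-hypotheses completeness/right-continuity assumed on $(\mathcal F_t)$). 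Then, for $s\le t$, I would condition on $\mathcal G_s=\mathcal F_{E_s}$: the pair $(E_s,E_t)$ is an ordered pair of bounded $\mathcal F$-stopping times, and by the strong Markov / optional sampling theorem for the martingale $M$ along the stopping times $E_s\le E_t$ one obtains $E[M_{E_t}\mid\mathcal F_{E_s}]=M_{E_s}$, i.e. $E[\tilde N(E_t,A)\mid\mathcal G_s]=\tilde N(E_s,A)$. This is exactly the martingale property; adaptedness of $\tilde N(E_t,A)$ to $\mathcal G_t=\mathcal F_{E_t}$ is immediate since $M$ is $\mathcal F$-adapted and $M_{E_t}$ is $\mathcal F_{E_t}$-measurable.

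The main technical obstacle is justifying the optional sampling step rigorously, i.e. checking that Doob's optional sampling theorem genuinely applies along the family $\{E_t\}$: one needs the stopping times $E_t$ to take values in a set on which $M$ is uniformly integrable, and here this is guaranteed because each $E_t$ is bounded above by an a.s.\ finite random variable — more precisely, one should first show $E_t$ is a.s.\ finite (from $D(\tau)\to\infty$ as $\tau\to\infty$, which holds for any nontrivial subordinator) and then apply optional sampling on the finite horizon, or equivalently apply it to the stopped martingale $M^{E_t}$. A secondary point worth spelling out is that $E_t$ is an $\mathcal F_s$-stopping time, which rests on the identity $\{E_t< s\}=\{D(s-)>t\}$ (or $\{E_t\le s\}=\{D(s)\ge t\}$) together with right-continuity of $\mathcal F$; this is standard for inverse subordinators but should be stated. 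Once these two measurability/integrability facts are in place, the martingale identity follows directly from optional sampling applied to $M$, and no further computation is needed.
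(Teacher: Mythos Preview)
Your overall strategy---optional sampling of the $\mathcal F_t$-martingale $M_t=\tilde N(t,A)$ along the stopping times $E_s\le E_t$---is the same as the paper's, but there is a genuine gap in the integrability step. You assert that $E_t$ is a \emph{bounded} stopping time and later that ``each $E_t$ is bounded above by an a.s.\ finite random variable'' suffices for uniform integrability of $M$ on the relevant time set. Neither is correct: for a general subordinator $D$, the inverse $E_t$ is a.s.\ finite but \emph{not} deterministically bounded (e.g.\ for an inverse stable subordinator $E_t$ has a density on $(0,\infty)$), and the compensated Poisson martingale $M$ has $\mathbb E[M_s^2]=s\,\nu(A)\to\infty$, so $\{M_s:s\ge 0\}$ is not uniformly integrable. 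Hence the naive form of Doob's optional sampling theorem does not apply, and ``apply optional sampling on the finite horizon'' or ``to the stopped martingale $M^{E_t}$'' does not close the gap: there is no finite deterministic horizon, and stopping $M$ at $E_t$ is precisely the object whose integrability is in question.

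What is missing is an a priori bound showing $M_{E_t}\in L^1$ (indeed, a dominating random variable so that one can pass to the limit after localizing). The paper supplies this by first proving the moment estimate $\mathbb E[E_t]<\infty$ via the Laplace exponent of $D$ (using $P(E_t>s)\le P(D(s)<t)\le e^{xt-s\phi(x)}$), then combining Doob's maximal inequality with $\mathbb E[\tilde N(\tau,A)^2]=\nu(A)\tau$ to obtain $\mathbb E\big[\sup_{0\le u\le t}|\tilde N(E_u,A)|\big]\le 2\nu(A)^{1/2}\mathbb E[E_t]^{1/2}<\infty$. With this dominating bound in hand, one localizes (stop $M$ when it exits $[-n,n]$), applies optional sampling to the bounded stopped martingale, and then lets $n\to\infty$ by dominated convergence. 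Your write-up should either reproduce this moment computation for $E_t$ or find another route to $L^1$-domination; merely knowing $E_t<\infty$ a.s.\ is not enough.
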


\begin{proof}
Let $\tau_n=\inf\{t\geq 0; |\tilde{N}(t,A)|\geq n\}$, it is obvious that $\tau_n \rightarrow \infty$ as $n\rightarrow \infty$. Then $|\tilde{N}(\tau_n\land t,A)|\leq n+1$, for all $t\in \mathbb{R}_+$, thus $\tilde{N}(\tau_n\land t,A)$ is a bounded martingale.

By optional stopping theorem, for any $0 \leq s<t$,
\begin{equation}
\mathbb{E}\Big[\tilde{N}(\tau_n \land E_t, A)|\mathcal{G}_s \Big]=\tilde{N}(\tau_n \land E_s, A).
\end{equation}

The right hand side $\tilde{N}(\tau_n \land E_s, A)$ converges to $\tilde{N}(E_s, A)$, as $n \rightarrow \infty$. For the left hand side, we have
\begin{equation}
|\tilde{N}(\tau_n \land E_t, A)|\leq \sup_{0\leq u \leq t}|\tilde{N}(E_u, A)|,
\end{equation}

thus, by H\"older's inequality, Doob's martingale inequality,
\begin{equation}
\begin{aligned}
\mathbb{E}\Big[\Big|\tilde{N}(\tau_n \land E_t, A)\Big| \Big]&\leq\mathbb{E}\Big[ \Big| \sup_{0\leq u \leq t}|\tilde{N}(E_u, A)\Big| \Big]=\mathbb{E}\Big[ \Big| \sup_{0\leq u \leq E_t}|\tilde{N}(u, A)\Big| \Big]\\
&=\int_0^{\infty}\mathbb{E}\Big[ \Big| \sup_{0\leq u \leq \tau}|\tilde{N}(u, A)\Big|\Bigg| \tau=E_t\Big] f_{E_t}(\tau) d\tau\\
&\leq\int_0^{\infty}\mathbb{E}\Big[ \Big| \sup_{0\leq u \leq \tau}|\tilde{N}(u, A)\Big|^2\Bigg| \tau=E_t\Big]^\frac{1}{2} f_{E_t}(\tau) d\tau\\
&\leq \int_0^{\infty}2\mathbb{E}\Big[ \Big| |\tilde{N}(\tau, A)\Big|^2\Bigg| \tau=E_t\Big]^\frac{1}{2} f_{E_t}(\tau) d\tau\\
&=2\int_0^{\infty}[\nu(A)\tau]^\frac{1}{2}f_{E_t}(\tau) d\tau\\
&=2\nu(A)^\frac{1}{2}\mathbb{E}[E_t^\frac{1}{2}].\\
&\leq2\nu(A)^\frac{1}{2}\mathbb{E}[E_t]^\frac{1}{2},
\end{aligned}
\end{equation}
where the last inequality follows from Jensen's inequality.

For any $t\geq 0$ and $x>0$, by Markov's inequality, we have
\begin{equation}
P(E_t>s)\leq P(D(s)<t)=P(e^{-xD(s)}\geq e^{-xt})\leq e^{xt}\mathbbm{E}[e^{-xD(s)}]=e^{xt}e^{-s\phi(x)},
\end{equation}
it follows that
\begin{equation}
\mathbbm{E}[E_t]=\int_0^\infty P(E_t>s)ds=e^{xt}\frac{1}{\phi(x)}<\infty.
\end{equation}

Then, by dominated convergence theorem, we have
\begin{equation}
\mathbb{E}\Big[\tilde{N}(\tau_n \land E_t, A)|\mathcal{G}_s \Big]\rightarrow \mathbb{E}\Big[\tilde{N}( E_t, A)|\mathcal{G}_s \Big],
\end{equation}
as $n\rightarrow \infty$. So
\begin{equation}
\mathbb{E}\Big[\tilde{N}( E_t, A)|\mathcal{G}_s \Big]=\tilde{N}(E_s, A).
\end{equation}

Also, \begin{equation}
\mathbb{E}\Big[|\tilde{N}( E_t, A)|\Big]\leq \mathbb{E}\Big[ \sup_{0\leq u \leq t}|\tilde{N}( E_u, A)|\Big]<\infty,
\end{equation}
thus $\tilde{N}(E_t, A)$ is a martingale.

\end{proof}

\begin{tm}\label{tm1}
Assume that there exists a function $V(t_1,t_2,x)\in C^{1,1,2}(\mathbb{R}_+\times \mathbb{R}_+\times S_h,\mathbb{R})$ with $h\geq 2c$ and $\,\mu\in \mathcal{K}$ such that for all $(t_1,t_2,x) \in \mathbb{R}_+\times \mathbb{R}_+\times S_h$\\

\begin{equation}
\begin{aligned}
&1.\ V(t_1,t_2,0)=0,\\
&2.\ \mu(|x|)\leq V(t_1,t_2,x), \\
&3.\ L_1V(t_1,t_2,x) \leq 0,\\
&4.\ L_2V(t_1,t_2,x) \leq 0,
\end{aligned}
\end{equation}
then the trivial solution of the time-changed SDE \eqref{SDE} is stochastically stable or stable in probability.
\end{tm}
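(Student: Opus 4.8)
The plan is to adapt the classical Lyapunov-function argument for stability in probability (as in Mao's monograph \cite{maotext}) to the time-changed setting, using the It\^o formula of Lemma \ref{itofor} together with the fact (the preceding lemma) that the time-changed compensated Poisson integral is a martingale. Fix $\epsilon\in(0,1)$ and $r>0$; without loss of generality take $r<h$. The goal is to produce $\delta>0$ so that $|x_0|<\delta$ forces $P\{|X(t,x_0)|<r\text{ for all }t\ge 0\}\ge 1-\epsilon$.

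First I would apply the time-changed It\^o formula \eqref{itolevy} to $V(t,E_t,X(t))$. By hypotheses 3 and 4, the two drift terms $\int_0^t L_1V\,ds$ and $\int_0^t L_2V\,dE_s$ are nonpositive (here one uses that $E_s$ is nondecreasing so $dE_s$ integrates a nonpositive integrand to something nonpositive). The remaining two terms are the Brownian integral $\int_0^t V_x g\,dB_{E_s}$ and the compensated-jump integral $\int_0^t\int_{|y|<c}[V(s,E_s,X(s-)+h)-V(s,E_s,X(s-))]\tilde N(dE_s,dy)$; by the martingale property of time-changed Brownian motion and of the time-changed compensated Poisson measure, both are local martingales (indeed martingales after suitable localization). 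Define the stopping time $\tau=\inf\{t\ge 0:|X(t,x_0)|\ge r\}$. Applying optional stopping to $V(t\wedge\tau,E_{t\wedge\tau},X(t\wedge\tau))$ and taking expectations, the martingale terms drop and the drift terms are $\le 0$, giving
\begin{equation}
\mathbb{E}\big[V(t\wedge\tau,E_{t\wedge\tau},X(t\wedge\tau))\big]\le V(0,0,x_0).
\end{equation}
One subtlety worth a remark: since the jump size is bounded by $c$ and $h\ge 2c$, on $\{t<\tau\}$ we have $|X(t-)|<r$ and after a jump $|X(t-)+h|<r+c\le h$, so $V$ is evaluated only where it is defined; this is exactly why the hypothesis $h\ge 2c$ is imposed (take $r\le h-c$, say $r < h/2$).

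Next, on the event $\{\tau\le t\}$ we have $|X(\tau)|\ge r$ (using right-continuity and the bounded jump size to control the overshoot, or simply $|X(\tau)|\ge r$ by definition of the hitting time of the closed complement), hence $V(\tau,E_\tau,X(\tau))\ge \mu(|X(\tau)|)\ge \mu(r)$ by hypotheses 2 and monotonicity of $\mu$. Therefore
\begin{equation}
\mu(r)\,P\{\tau\le t\}\le \mathbb{E}\big[V(t\wedge\tau,E_{t\wedge\tau},X(t\wedge\tau))\big]\le V(0,0,x_0).
\end{equation}
Letting $t\to\infty$ and using $\mu(r)>0$ yields $P\{\tau<\infty\}\le V(0,0,x_0)/\mu(r)$, i.e. $P\{|X(t,x_0)|<r\text{ for all }t\ge 0\}\ge 1 - V(0,0,x_0)/\mu(r)$. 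Finally, since $V(0,0,\cdot)$ is continuous with $V(0,0,0)=0$ (hypothesis 1), choose $\delta>0$ small enough that $\sup_{|x_0|<\delta}V(0,0,x_0)\le \epsilon\,\mu(r)$; then $|x_0|<\delta$ gives the desired bound, completing the proof.

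The main obstacle I anticipate is the rigorous justification of the optional stopping / localization step in the time-changed setting: one must argue that the stochastic integrals against $dB_{E_s}$ and against $\tilde N(dE_s,dy)$, stopped at $\tau$, are genuine martingales (not merely local martingales) so that their expectations vanish. This requires a localizing sequence $\sigma_n\uparrow\infty$ (e.g. exit times of $X$ from $S_{r-1/n}$ intersected with the $\tau_n$ from the previous lemma), applying the identity at $t\wedge\tau\wedge\sigma_n$, and then passing to the limit via monotone/dominated convergence — exactly the pattern already used in the proof of the preceding lemma, where finiteness of $\mathbb{E}[E_t]$ was established. The time-change $E_t$ being continuous and nondecreasing with $E_0=0$ makes the drift-sign arguments clean, so the only real care needed is in this limiting argument.
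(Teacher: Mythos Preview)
Your proposal is correct and follows essentially the same approach as the paper's proof: choose $\delta$ via continuity of $V(0,0,\cdot)$, define the exit time $\tau_r$ with $r\le h/2$, apply the time-changed It\^o formula, use conditions~3 and~4 to discard the drift terms, and bound $\mu(r)P(\tau_r<t)\le V(0,0,x_0)$. The paper handles the localization issue you flag at the end exactly as you anticipate, by introducing explicit stopping times $U_k$ and $W_k$ (first times the stopped Brownian and compensated-jump integrals exceed $k$ in modulus), applying It\^o at $t\wedge\tau_r\wedge U_k\wedge W_k$, and then letting $k\to\infty$.
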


\begin{proof}
See Section \ref{appd}.
\end{proof}

\begin{rk}
Note that $L_1$ and $L_2$ mentioned here and in following theorems are same as these in Lemma \ref{itofor}, c is maximum allowable jump size in \eqref{SDE}.
\end{rk}

\begin{tm}\label{tm2}
Assume that there exists a function $V(t_1,t_2,x)\in C^{1,1,2}(\mathbb{R}_+\times \mathbb{R}_+\times S_h,\mathbb{R})$ with $h\geq 2c$ and $\,\mu \in \mathcal{K}$ such that for all $(t_1,t_2,x) \in \mathbb{R}_+\times \mathbb{R}_+\times S_h$
\begin{equation}
\begin{aligned}
&1.\ V(t_1,t_2,0)=0,\\
&2.\ \mu(|x|)\leq V(t_1,t_2,x),\\
&3.\ L_1V(t_1,t_2,x) \leq -\gamma_1(\alpha)\ a.s. \ and\ L_2V(t_1,t_2,x) \leq -\gamma_2(\alpha)\ a.s., \ for \ any \ \alpha\in (0,h),\\
&\ where\ \gamma_1(\alpha) \geq 0\ and\ \gamma_2(\alpha) \geq 0\ but\ not\ equal\ to\ zero\ at\ the\ same\ time, \ x\in S_h- \bar{S_\alpha},
\end{aligned}
\end{equation}
then the trivial solution of the time-changed SDE \eqref{SDE} is stochastically  asymptotically stable.
\end{tm}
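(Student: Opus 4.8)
The plan is to mimic the classical Lyapunov argument for stochastic asymptotic stability (as in Mao's text), adapted to the time-changed setting via the It\^o formula of Lemma \ref{itofor}. First I would invoke Theorem \ref{tm1}: since hypotheses 1, 2 together with $L_1V\le 0$, $L_2V\le 0$ (which follow from the strengthened condition 3 by taking $\gamma_1,\gamma_2\ge 0$) hold, the trivial solution is already stochastically stable. Hence, given $\epsilon\in(0,1)$, there is a $\delta_0>0$ so that $|x_0|<\delta_0$ implies $P\{|X(t,x_0)|<h\text{ for all }t\ge 0\}\ge 1-\epsilon/2$; fix such an $x_0$ and work on that event, where $V(t,E_t,X(t))$ stays in the domain where the hypotheses apply.

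Next I would apply Lemma \ref{itofor} to $V(t,E_t,X(t))$ and stop at $\tau_\alpha\wedge\theta_h\wedge t$, where $\theta_h=\inf\{t:|X(t)|\ge h\}$ and $\tau_\alpha=\inf\{t:|X(t)|\le\alpha\}$ for a small $\alpha>0$. Taking expectations kills the $dB_{E_s}$ martingale term and the compensated-Poisson term $\tilde N(dE_s,dy)$ (the latter being a martingale by the preceding lemma), leaving
\begin{equation}
\mathbb{E}\big[V(\tau\wedge t, E_{\tau\wedge t}, X(\tau\wedge t))\big]=V(0,0,x_0)+\mathbb{E}\!\int_0^{\tau\wedge t}\! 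L_1V\,ds+\mathbb{E}\!\int_0^{\tau\wedge t}\! L_2V\,dE_s,
\end{equation}
where $\tau=\tau_\alpha\wedge\theta_h$. By condition 3, the two drift integrals are bounded above by $-\gamma_1(\alpha)\,\mathbb{E}[\tau\wedge t]$ and $-\gamma_2(\alpha)\,\mathbb{E}[E_{\tau\wedge t}]$ respectively, so $\mathbb{E}[\tau\wedge t]$ (or $\mathbb{E}[E_{\tau\wedge t}]$, whichever has a nonzero coefficient) stays bounded uniformly in $t$ by $V(0,0,x_0)/\gamma_i(\alpha)$. Since $E_t\to\infty$ a.s. as $t\to\infty$ (the subordinator $D$ does not explode), this forces $P\{\tau_\alpha<\infty\}=1$ on the stability event, i.e. the solution enters $\bar S_\alpha$ with probability at least $1-\epsilon/2$.

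Finally I would upgrade "reaches $\bar S_\alpha$" to "converges to $0$": combine the recurrence-to-$\bar S_\alpha$ conclusion with the supermartingale property of $V(t,E_t,X(t))$ on the stability event (guaranteed by $L_1V\le 0$, $L_2V\le 0$), which by the martingale convergence theorem makes $V$ converge a.s.; then a standard argument (if $\limsup|X(t)|>0$ while $X$ returns to $\bar S_\alpha$ infinitely often, $V$ would oscillate, contradicting convergence) together with letting $\alpha\downarrow 0$ yields $P\{\lim_{t\to\infty}X(t,x_0)=0\}\ge 1-\epsilon$. The main obstacle is the time-change bookkeeping: one integral is taken $ds$ and the other $dE_s$, so I must be careful that the "not both zero" hypothesis on $(\gamma_1,\gamma_2)$ really does control the relevant clock — using $\mathbb{E}[E_t]<\infty$ and $E_t\to\infty$ from the previous lemma — and that the optional-stopping and dominated-convergence steps removing the jump martingale term are justified exactly as in that lemma's proof.
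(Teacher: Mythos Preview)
Your first two stages --- invoking Theorem~\ref{tm1} for stability, then applying the time-changed It\^o formula stopped at $\tau_\alpha\wedge\theta_h$ to get
\[
\gamma_1(\alpha)\,\mathbb{E}[\tau\wedge t]+\gamma_2(\alpha)\,\mathbb{E}[E_{\tau\wedge t}]\le V(0,0,x_0),
\]
and hence $P(\tau_\alpha<\infty)$ close to $1$ --- coincide with the paper's proof (the paper splits the $\epsilon$-budget into fifths rather than halves, and inserts auxiliary localizing times $U_k,W_k$ so that the $dB_{E_s}$ and $\tilde N(dE_s,dy)$ integrals are honest mean-zero martingales before taking expectation, but the skeleton is identical).

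The divergence is in the trapping step. The paper does \emph{not} appeal to supermartingale convergence. Instead it restarts It\^o's formula from the entry time $\sigma=\tau_\alpha$ (on the event $\{\tau_\alpha<\tau_h\wedge\theta\}$) and stops at the first subsequent exit $\tau_\beta$ from a larger ball $\bar S_\beta$. Comparing endpoints via $V(\tau_\beta,\cdot,\cdot)\ge\mu(\beta)$ and $V(\sigma,\cdot,\cdot)\le B_\alpha:=\sup_{\mathbb{R}_+\times\mathbb{R}_+\times\bar S_\alpha}V$ yields
\[
P(\{\tau_\beta<\infty\}\cap\{\tau_h=\infty\})\le \frac{B_\alpha}{\mu(\beta)},
\]
and one then chooses $\alpha$ small so that $B_\alpha/\mu(\beta)<\epsilon/5$, giving $P(\limsup_{t\to\infty}|X(t)|\le\beta)>1-\epsilon$ for every $\beta>0$.

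Your supermartingale/oscillation route is a recognizable alternative, but as sketched it has a gap: since $V$ depends on the time variables $t_1=t$ and $t_2=E_t$ as well as on $x$, the implication ``$X$ oscillates $\Rightarrow$ $V$ oscillates'' is not automatic. To know $V(t,E_t,X(t))$ is \emph{small} when $|X(t)|$ is small you need an upper bound $V(t_1,t_2,x)\le\mu_2(|x|)$ uniform in $(t_1,t_2)$ --- the decrescent condition --- which is not among the stated hypotheses. The paper's proof in fact consumes the same implicit assumption when it asserts $B_\alpha\to 0$ as $\alpha\to 0$, so this is a shared lacuna rather than an error peculiar to your approach; but the paper's two-stopping-time comparison makes explicit exactly where that bound enters, whereas your sketch buries it inside the oscillation claim. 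If you either adopt the paper's $\sigma,\tau_\beta$ argument or add a decrescent hypothesis explicitly, your plan goes through.
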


\begin{proof}
See Section \ref{appd}.
\end{proof}

\begin{tm}\label{tm3}
Assume that there exists a function $V(t_1,t_2,x)\in C^{1,1,2}(\mathbb{R}_+\times \mathbb{R}_+\times \mathbb{R},\mathbb{R})$ and $\,u\in \mathcal{K}$ such that for all $(t_1,t_2,x) \in \mathbb{R}_+\times \mathbb{R}_+\times \mathbb{R}$
\begin{equation}
\begin{aligned}
&1.\ V(t_1,t_2,0)=0,\\
&2.\ \mu(|x|)\leq V(t_1,t_2,x),\\
&3.\ L_1V(t_1,t_2,x) \leq -\gamma_1(x)\ a.s. \ and\ L_2V(t_1,t_2,x) \leq -\gamma_2(x)\ a.s., \\
&\ where\ \gamma_1(x) \geq 0\ and\ \gamma_2(x) \geq 0\ but\ not\ equal\ to\ zero\ at\ the\ same\ time,\\
&4.\ \lim_{|x|\rightarrow \infty}\inf_{t_1,t_2\geq 0}V(t_1,t_2,x)=\infty,
\end{aligned}
\end{equation}
then the trivial solution of the time-changed SDE \eqref{SDE} is globally stochastically asymptotically stable.
\end{tm}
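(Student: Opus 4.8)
The plan is to combine the argument behind Theorem \ref{tm1} (to get stochastic stability) with a supermartingale-convergence argument (to get the almost-sure convergence to $0$), exploiting the fact that hypothesis 4 makes $V$ a proper function. First I would fix $x_0\in\mathbb{R}$ and apply the time-changed It\^o formula from Lemma \ref{itofor} to $V(t,E_t,X(t))$. Using hypotheses $3$, i.e. $L_1V\le -\gamma_1\le 0$ and $L_2V\le -\gamma_2\le 0$, together with the fact (Lemma preceding Theorem \ref{tm1}) that the stochastic integrals against $dB_{E_s}$ and against the time-changed compensated Poisson measure $\tilde N(dE_s,dy)$ are (local) martingales, I obtain that $V(t,E_t,X(t))$ is a nonnegative supermartingale (after the usual localization by stopping times $\tau_n=\inf\{t: |X(t)|\ge n\}$ and passing to the limit via Fatou, exactly as in the proof of Theorem \ref{tm1} which we may invoke for the stochastic-stability half). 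Nonnegativity comes from hypothesis $2$, $V\ge \mu(|x|)\ge 0$.

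Next I would invoke stochastic stability, which holds because hypotheses $1$--$3$ here imply hypotheses $1$--$4$ of Theorem \ref{tm1} on every ball $S_h$ (since $\gamma_1,\gamma_2\ge0$), so the trivial solution is stable in probability; this is one of the two things demanded by the definition of global stochastic asymptotic stability. It remains to prove $P\{\lim_{t\to\infty}X(t,x_0)=0\}=1$ for every $x_0$. By the supermartingale convergence theorem, $V(t,E_t,X(t))$ converges almost surely to a finite nonnegative limit $V_\infty$ as $t\to\infty$. Hypothesis 4, $\lim_{|x|\to\infty}\inf_{t_1,t_2}V(t_1,t_2,x)=\infty$, forces $\sup_{t\ge0}|X(t)|<\infty$ a.s., so the paths stay in a (random) bounded set. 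Then, integrating the It\^o formula and taking expectations, the supermartingale property yields
\begin{equation}
\mathbb{E}\int_0^\infty \gamma_1(X(s))\,ds+\mathbb{E}\int_0^\infty \gamma_2(X(s))\,dE_s\le V(0,0,x_0)<\infty,
\end{equation}
so both integrals are finite a.s. Since $E_t\to\infty$ a.s. (as $D$ is a genuine subordinator, $E_t$ is unbounded), at least one of $\int_0^\infty\gamma_1(X(s))\,ds$ or $\int_0^\infty\gamma_2(X(s))\,dE_s$ runs over an infinite-length time variable; because $\gamma_1,\gamma_2$ are not simultaneously zero and vanish only where their argument does, a standard argument (using RCLL regularity of paths and boundedness of the trajectory) shows $\liminf_{t\to\infty}|X(t)|=0$ a.s. Combining $\liminf_{t\to\infty}|X(t)|=0$ with the a.s. convergence of $V(t,E_t,X(t))$ and continuity and positivity of $\mu$ forces $V_\infty=0$, hence $\mu(\limsup|X(t)|)=0$, hence $X(t)\to0$ a.s.

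The main obstacle is the passage from ``$\int_0^\infty\gamma_i(X(s))\,(\text{time})<\infty$'' plus boundedness of the path to ``$\liminf|X(t)|=0$'': one must handle the case distinction between $\gamma_1$ and $\gamma_2$ and, crucially, control the time-changed integral $\int_0^\infty\gamma_2(X(s))\,dE_s$, which is measured on the random clock $E_s$ rather than real time. Here one uses that $E_s$ is continuous and strictly increasing on the support of its increments (since $D$ is strictly increasing off its jump times) and that $E_t\to\infty$, so finiteness of the time-changed integral still forces $\gamma_2(X(s))$ to be small along a sequence. A secondary subtlety is the localization/uniform-integrability step needed to turn the local-martingale terms into genuine martingales so that the expectation inequality above is valid; this is dealt with exactly as in the lemma showing $\tilde N(E_t,A)$ is a martingale and as in the proof of Theorem \ref{tm1}, using $\mathbb{E}[E_t]<\infty$.
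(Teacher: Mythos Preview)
The paper does not give a detailed proof of this theorem; it simply says the argument follows Mao's Theorem~4.2.4 and omits details. The implied approach is therefore the stopping-time machinery already written out in the proof of Theorem~\ref{tm2}: one uses condition~4 (radial unboundedness of $V$) to show that for any initial point $x_0$ the exit time $\tau_h\to\infty$ as $h\to\infty$ almost surely, and then reruns the $\tau_\alpha,\tau_\beta,\sigma$ argument of Theorem~\ref{tm2} with $h$ arbitrarily large and $\epsilon$ replaced by an arbitrarily small number, obtaining $P\{\lim_{t\to\infty}X(t,x_0)=0\}=1$ for every $x_0$. Your route via the supermartingale convergence theorem is a genuine alternative and is in some ways cleaner: it replaces the nested stopping-time bookkeeping by the single statement that the nonnegative supermartingale $V(t,E_t,X(t))$ has an a.s.\ limit $V_\infty$.

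There is, however, a real gap in your last step. From $\liminf_{t\to\infty}|X(t)|=0$ you extract a sequence $t_n$ with $X(t_n)\to0$, and you want to conclude $V_\infty=\lim_n V(t_n,E_{t_n},X(t_n))=0$. But the hypotheses give only a \emph{lower} bound $\mu(|x|)\le V(t_1,t_2,x)$ and pointwise continuity with $V(t_1,t_2,0)=0$; there is no uniform upper control of $V$ near $x=0$ as $t_1,t_2\to\infty$, so $V(t_n,E_{t_n},X(t_n))$ need not tend to $0$. Your appeal to ``continuity and positivity of $\mu$'' uses the wrong side of the inequality. (The paper's Theorem~\ref{tm2} proof quietly uses the analogous fact, asserting $B_\alpha:=\sup_{t_1,t_2,\,|x|\le\alpha}V(t_1,t_2,x)\to0$; that is exactly the missing uniform decrescence condition, and it is what the Mao-style argument needs as well.) A second, smaller gap is in the passage to $\liminf|X(t)|=0$: the hypotheses say only that $\gamma_1(x),\gamma_2(x)\ge0$ and are not both zero, with no continuity or positive lower bound on annuli, so ``a standard argument using RCLL regularity and boundedness'' does not go through without adding, e.g., lower semicontinuity of $\gamma_1+\gamma_2$. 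If you are willing to assume $B_\alpha\to0$ (equivalently, $V$ decrescent) and mild regularity of the $\gamma_i$, your supermartingale argument can be completed; otherwise it is safer to follow the stopping-time scheme of Theorem~\ref{tm2}, which is what the paper has in mind.
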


\begin{proof}
This proof has similar idea as Theorem 4.2.4 in \cite{maotext}, so we omit the details here.
\end{proof}

\begin{exmp}
Consider the following SDE driven by time-changed L\'evy noise\\
\begin{equation}\label{example1}
\begin{aligned}
dX(t)=&f(t,E_t)X(t)dt+k(t,E_t)X(t)dE_t\\
&+g(t,E_t)X(t)dB_{E_t}+\int_{|y|<c}h(t,E_t,y)X(t)d\tilde{N}(dE_s,dy)
\end{aligned}
\end{equation}
with $X(0)=x_0$, where $k,f,g,h$ are $\mathcal{G}_t$-measurable real-valued functions satisfying Lipschitz condition \ref{lip} and assumption \ref{tec}. Define Lyapunov function
\begin{equation}
V(t_1,t_2,x)=|x|^{\alpha}
\end{equation}
on $\mathbb{R}_+\times \mathbb{R}_+\times \mathbb{R}$ for some $\alpha\in (0,1)$. Then
\begin{equation}
L_1V(t_1,t_2,x)=\alpha f(t_1,t_2) |x|^\alpha
\end{equation}
and
\begin{equation}
\begin{aligned}
L_2V(t_1,t_2,x)=\bigg[\alpha k(t_1,t_2)&+\frac{\alpha(\alpha-1)}{2}g^2(t_1,t_2)\\
&+\int_{|y|<c}\Big[|1+h(t_1,t_2,y)|^\alpha-1-\alpha h(t_1,t_2,y)\Big]\nu(dy)\bigg]|x|^\alpha.
\end{aligned}
\end{equation}
Thus, if
\begin{equation}
\alpha f(t,E_t) \leq 0 \ \ a.s.
\end{equation}
and
\begin{equation}
\alpha k(t,E_t) +\frac{\alpha(\alpha-1)}{2}g^2(t,E_t)+\int_{|y|<c}\Big[|1+h(t,E_t,y)|^\alpha-1-\alpha h(t,E_t,y)\Big]\nu(dy) \leq 0 \ \ a.s.
\end{equation}
for all $t,E_t\in \mathbb{R}_+$, the trivial solution of SDE \eqref{example1} is stochastically stable, by Theorem \ref{tm1}.

Let $\alpha=0.5,\ c=1$ and $f(t_1,t_2)=-1,\ k(t_1,t_2)=0.25,\ g(t_1,t_2)=1,\ h(t_1,t_2,y)=y$ for all $t_1,t_2 \in \mathbb{R}_+$, then
 \begin{equation}
L_1V(t_1,t_2,x)=-\frac{|x|^\alpha}{2}\leq 0
\end{equation}
and
\begin{equation}
L_2V(t_1,t_2,x)=\int_{|y|<1}\Big[|1+y|^\frac{1}{2}-1-\frac{1}{2}y\Big]\nu(dy)<0.
\end{equation}
Therefore, by Theorem \ref{tm3}, trivial solution of SDE
\begin{equation}
dX(t)=-X(t)dt+0.25X(t)dE_t+X(t)dB_{E_t}+\int_{|y|<1}yX(t)d\tilde{N}(ds,dy)
\end{equation}
with $X(0)=x_0$ is globally stochastically asymptotically stable.

\end{exmp}

\begin{tm}\label{tm4}
Let $p,\alpha_1,\alpha_2,\alpha_3$ be positive constants. If $V\in C^2(\mathbb{R}_+\times \mathbb{R}_+\times \mathbb{R}; \mathbb{R}_+)$ satisfies
\begin{equation}
\begin{aligned}
&1.\ V(t_1,t_2,0)=0,\ \ \ \ \ 2.\ \alpha_1|x|^p \leq V(t_1,t_2,x) \leq \alpha_2|x|^p, \\
&3.\ L_2V(t_1,t_2,x) \leq 0,\ \ 4.\ L_1V(t_1,t_2,x) \leq -\alpha_3V(t_1,t_2,x),
\end{aligned}
\end{equation}
$\forall (t_1,t_2,x) \in \mathbb{R}_+\times \mathbb{R}_+\times \mathbb{R},$ then the trivial solution of the time-changed SDE \eqref{SDE} is pth moment exponentially stable with
\begin{equation}
\mathbb{E}|X(t,x_0)|^p\leq \frac{\alpha_2}{\alpha_1}|x_0|^p \exp(-\alpha_3 t).
\end{equation}
\end{tm}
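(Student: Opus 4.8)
The plan is to apply the time-changed It\^o formula of Lemma \ref{itofor} not to $V$ itself but to the auxiliary function
\[
\tilde V(t_1,t_2,x):=e^{\alpha_3 t_1}\,V(t_1,t_2,x),
\]
which still belongs to $C^2(\mathbb{R}_+\times\mathbb{R}_+\times\mathbb{R};\mathbb{R}_+)$, the point being that the exponential weight is attached to the genuine time variable $t_1$, i.e.\ the variable that governs $L_1$. A direct computation, using that $e^{\alpha_3 t_1}$ is independent of $t_2$ and $x$, gives
\[
L_1\tilde V(t_1,t_2,x)=e^{\alpha_3 t_1}\big(\alpha_3 V(t_1,t_2,x)+L_1V(t_1,t_2,x)\big),\qquad
L_2\tilde V(t_1,t_2,x)=e^{\alpha_3 t_1}L_2V(t_1,t_2,x).
\]
By hypothesis 4, $\alpha_3 V+L_1V\le 0$, hence $L_1\tilde V\le 0$; by hypothesis 3, $L_2\tilde V\le 0$. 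Thus Lemma \ref{itofor} applied to $\tilde V(t,E_t,X(t))$ produces, for every $t\ge 0$,
\[
\tilde V(t,E_t,X(t))=V(0,0,x_0)+\int_0^t L_1\tilde V(s,E_s,X(s-))\,ds+\int_0^t L_2\tilde V(s,E_s,X(s-))\,dE_s+M_t,
\]
where $M_t$ denotes the sum of the $dB_{E_s}$-integral and the compensated-Poisson integral appearing in \eqref{itolevy}.

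Next I would show $\mathbb{E}[M_t]=0$ by a localization argument. Set $\tau_n:=\inf\{t\ge 0:|X(t)|\ge n\}$; since \eqref{SDE} admits a (global, non-exploding) solution, $\tau_n\uparrow\infty$ almost surely. On the stochastic interval $[0,\tau_n]$ the process $X$ is bounded, so the continuity of $V$ together with the Lipschitz/linear-growth control on $f,k,g,h$ from Assumption \ref{lip} makes the integrands of $M$ bounded; combined with $\mathbb{E}[E_t]<\infty$ (established in the proof of the preceding lemma) and the martingale property of the time-changed compensated Poisson integral proved there, $M_{t\wedge\tau_n}$ is a mean-zero martingale. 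Stopping the It\^o identity at $\tau_n$, taking expectations, and discarding the two nonpositive $ds$- and $dE_s$-integrals yields
\[
\mathbb{E}\big[\tilde V(t\wedge\tau_n,E_{t\wedge\tau_n},X(t\wedge\tau_n))\big]\le V(0,0,x_0)\le \alpha_2|x_0|^p .
\]
Since $\tilde V\ge 0$ and $t\wedge\tau_n\to t$ a.s.\ with $X$ right-continuous, Fatou's lemma gives $\mathbb{E}\big[\tilde V(t,E_t,X(t))\big]\le \alpha_2|x_0|^p$, that is, $e^{\alpha_3 t}\,\mathbb{E}\big[V(t,E_t,X(t))\big]\le \alpha_2|x_0|^p$.

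Finally, hypothesis 2 provides the lower bound $\alpha_1|X(t)|^p\le V(t,E_t,X(t))$, so $\alpha_1 e^{\alpha_3 t}\,\mathbb{E}|X(t,x_0)|^p\le \alpha_2|x_0|^p$, which is exactly $\mathbb{E}|X(t,x_0)|^p\le \frac{\alpha_2}{\alpha_1}|x_0|^p\exp(-\alpha_3 t)$. I expect the only genuinely delicate step to be the verification that the stochastic-integral part $M$ contributes zero expectation: because the integrands are not a priori square-integrable and because the Brownian and Poisson integrals here are themselves driven by the time-changed clock $E_t$, this requires the stopping times $\tau_n$ and a careful passage to the limit, leaning on non-explosion of $X$ and on $\mathbb{E}[E_t]<\infty$; the generator computation and the final sandwich estimate are routine.
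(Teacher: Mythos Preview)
Your proposal is correct and follows essentially the same route as the paper: both introduce the weighted function $e^{\alpha_3 t_1}V(t_1,t_2,x)$, apply the time-changed It\^o formula, localize so that the $dB_{E_s}$- and $\tilde N(dE_s,dy)$-integrals have zero mean, discard the nonpositive $ds$- and $dE_s$-integrals, and finish with the sandwich $\alpha_1|x|^p\le V\le\alpha_2|x|^p$. The only cosmetic difference is in the localization: the paper introduces, in addition to your $\tau_n$, two further stopping times $U_k,W_k$ that truncate the stochastic integrals directly (as in its proof of Theorem \ref{tm1}), whereas you argue boundedness of the integrands on $[0,\tau_n]$ and then invoke Fatou's lemma to pass to the limit; both are standard ways of handling the same issue.
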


\begin{proof}
See Section \ref{appd}.
\end{proof}

\begin{exmp}
Consider the following SDE driven by time-changed L\'evy noise
\begin{equation}
dX(t)=-X(t)dt+E_tdB_{E_t}+\int_{|y|<1}\Big[X(t)y^2-X(t)\Big]\tilde{N}(dE_t,dy)
\end{equation}
with $X(0)=x_0$ and $\nu$ is a L\'evy measure.
Let $V(t_1,t_2,x)=|x|$, then
\begin{equation}
L_1V(t_1,t_2,x)=-|x|
\end{equation}
and
\begin{equation}
\begin{aligned}
L_2V(t_1,t_2,x)&=\int_{|y|<1}\Big[|x+xy^2-x|-|x|- sgn(x)(xy^2-x)\Big]\nu(dy)\\
&=\int_{|y|<1}\Big[(|y^2|-y^2)|x|\Big]\nu(dy)=0.
\end{aligned}
\end{equation}

By Theorem \ref{tm4}, X(t) is first moment exponentially stable, that is,
\begin{equation}
\mathbb{E}|X(t,x_0)|\leq |x_0|\exp(-t), \forall t\geq 0.
\end{equation}
\end{exmp}

Next, we reduce SDE \eqref{SDE} by setting $f(t,E_t,X(t-))=0$,
\begin{equation}\label{redSDE}
dX(t)=k(E_t, X(t-))dE_t+g(E_t, X(t-))dB_{E_t}+\int_{|y|<c}h(E_t, X(t-),y)\tilde{N}(dE_t,dy),\\
\end{equation}
with  $X(0)=x_0$.

Kobayashi \cite{keib} mentioned duality related to \eqref{redSDE} and the following SDE
\begin{equation}\label{redSDEsim}
dY(t)=k(t, Y(t-))dt+g(t, Y(t-))dB_{t}+\int_{|y|<c}h(t, Y(t-),y)\tilde{N}(dt,dy),\\
 Y(0)=x_0,
\end{equation}
with $Y(0)=x_0$,
stating that\\
1. If a process $Y(t)$ satisfies SDE \eqref{redSDEsim}, then $X(t):=Y(E_t)$ satisfies the time-changed SDE \eqref{redSDE};\\
2. If a process $X(t)$ satisfies the time-changed SDE \eqref{redSDE}, then $Y(t):=X(D(t))$ satisfies SDE \eqref{redSDEsim}.

\begin{coro}
Let $Y(t)$ be a stochastically stable (stochastically asymptotically stable, globally stochastically asymptotically stable) process satisfying SDE \eqref{redSDEsim}, then the trivial solution $X(t)$ of SDE \eqref{redSDE} is a stochastically stable (stochastically asymptotically stable, globally stochastically asymptotically stable) process, respectively.
\end{coro}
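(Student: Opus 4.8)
The plan is to reduce everything to the pathwise duality of Kobayashi recalled immediately before the statement. Since the coefficients satisfy the Lipschitz condition \ref{lip}, the solution of \eqref{redSDE} is unique, so the trivial solution $X(t)$ of \eqref{redSDE} must coincide with $Y(E_t)$, where $Y$ is the given solution of \eqref{redSDEsim}; note in particular $X(0)=Y(E_0)=Y(0)=x_0$. It then remains to transport each stability property from $Y$ to the subordinated process $X(t)=Y(E_t)$. For this I would only need two facts about the time change, to be recorded first: (i) $t\mapsto E_t$ is nondecreasing, takes values in $[0,\infty)$, and satisfies $E_0=0$, all immediate from \eqref{invstable}; and (ii) $E_t\to\infty$ almost surely as $t\to\infty$. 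Fact (ii) I would prove by noting that $\{E_t<M\}\subseteq\{D(M-)>t\}$ for all $M,t>0$, so $P(E_t<M)\to 0$ as $t\to\infty$ since $D(M-)<\infty$ a.s.; monotonicity of $E_t$ then gives an a.s. limit $E_\infty$ with $\{E_\infty<M\}\subseteq\bigcap_{t\ge 0}\{E_t<M\}$, forcing $P(E_\infty<M)=0$ for every $M$, i.e. $E_\infty=\infty$ a.s.

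For the stochastically stable case I would argue as follows. Given $\epsilon\in(0,1)$ and $r>0$, take $\delta=\delta(\epsilon,r)>0$ from the stochastic stability of $Y$; then for $|x_0|<\delta$, since $E_t\ge 0$ for every $t$,
\[
\{\,|Y(u,x_0)|<r \text{ for all } u\ge 0\,\}\subseteq\{\,|Y(E_t,x_0)|<r \text{ for all } t\ge 0\,\}=\{\,|X(t,x_0)|<r \text{ for all } t\ge 0\,\},
\]
and passing to probabilities yields $P\{|X(t,x_0)|<r\ \forall t\ge 0\}\ge 1-\epsilon$; hence $X$ is stochastically stable with the same $\delta$.

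For the stochastically asymptotically stable and globally stochastically asymptotically stable cases, I would work on the event $\Omega_0=\{\lim_{u\to\infty}Y(u,x_0)=0\}\cap\{\lim_{t\to\infty}E_t=\infty\}$. On $\Omega_0$, for any $\eta>0$ one can pick $U$ with $|Y(u,x_0)|<\eta$ for $u\ge U$ and then $T$ with $E_t\ge U$ for $t\ge T$, so that $|X(t,x_0)|=|Y(E_t,x_0)|<\eta$ for $t\ge T$; thus $\lim_{t\to\infty}X(t,x_0)=0$ on $\Omega_0$. Since the event $\{\lim_{t\to\infty}E_t=\infty\}$ has probability $1$ by (ii), $P(\Omega_0)=P\{\lim_{u\to\infty}Y(u,x_0)=0\}$. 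This gives $P\{\lim_{t\to\infty}X(t,x_0)=0\}\ge 1-\epsilon$ (with the same $\delta_0(\epsilon)$ as for $Y$) in the stochastically asymptotically stable case, and $P\{\lim_{t\to\infty}X(t,x_0)=0\}=1$ for all $x_0\in\mathbb{R}$ in the global case; in the latter, the additional stochastic stability of $X$ is exactly the previous paragraph.

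There is essentially no hard analytic content here: the work is entirely in the bookkeeping of composing with $E_t$. The one point that genuinely must be checked — and the only place the structure of the subordinator enters — is fact (ii), the almost sure unboundedness of the time change, since without it convergence of $Y(u,x_0)$ as $u\to\infty$ would not be inherited by $Y(E_t,x_0)$ as $t\to\infty$; together with $E_0=0$, this is what makes the whole reduction to $Y$ legitimate.
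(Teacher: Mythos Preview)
Your proof is correct and follows the same reduction as the paper: identify $X(t,x_0)=Y(E_t,x_0)$ via Kobayashi's duality and read off each stability property. The one point of difference is that in the stochastically stable case the paper argues for an \emph{equality} $P\{\sup_{t\ge 0}|Y(E_t,x_0)|<r\}=P\{\sup_{u\ge 0}|Y(u,x_0)|<r\}$ by appealing to the fact that the image of $[0,\infty)$ under $t\mapsto E_t$ is almost surely all of $[0,\infty)$, whereas you use only the inclusion $\{|Y(u)|<r\ \forall u\ge 0\}\subseteq\{|Y(E_t)|<r\ \forall t\ge 0\}$ to obtain the inequality $\ge 1-\epsilon$, which is all the definition requires. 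Your route is slightly more robust (it does not need continuity of $E$, only $E_t\to\infty$ a.s., which you prove cleanly), while the paper's equality gives a marginally sharper statement; for the corollary itself both are adequate, and your explicit verification of fact~(ii) fills in exactly what the paper uses elsewhere but leaves implicit here.
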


\begin{proof}
This proof has similar idea as Corollary 3.1 in\cite{qwa}, thus we omit details.

\begin{rm}
Though the conclusion of Corollary 3.1 in\cite{qwa} is correct, there is a minor problem in the proof. We correct it as following
\begin{equation}
\begin{aligned}
P\Big\{|X(t,x_0)|<h, \forall t\geq 0\Big\}&=P\Big\{|Y(E_t,x_0)|<h, \forall t\geq 0\Big\}\\
&=P\Big\{\sup_{0\leq t< \infty}|Y(E_t,x_0)|<h\Big\}\\
&=P\Big\{\sup_{ \{E_t :\ 0\leq t<\infty\} }|Y(E_t,x_0)|<h\Big\}\\
&=P\Big\{\sup_{0\leq \tau< \infty}|Y(\tau,x_0)|<h\Big\}\\
&=P\Big\{|Y(t,x_0)|<h, \forall t\geq 0\Big\}\\
&=1-\epsilon.
\end{aligned}
\end{equation}
\end{rm}
Here, we use the fact that the image of $[0,\infty)$ under $E_t$ process is almost surely equal to $[0,\infty)$.

\end{proof}

\begin{coro}\label{tm5}
Let $Y(t)$ be a pth moment exponentially stable process satisfying SDE \eqref{redSDEsim}, the $X(t)$ is a pth moment asymptotically stable satisfying SDE \eqref{redSDE}.
\end{coro}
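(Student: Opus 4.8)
The plan is to exploit the duality $X(t)=Y(E_t)$ from \cite{keib} together with the standing independence of the subordinator $D$ (hence of its inverse $E$) from the driving noise $(B,N)$, and then to dominate $\mathbb{E}|X(t)|^p$ by conditioning on the time change. First I would record what $p$th moment exponential stability of $Y$ gives: constants $C>0$ and $\lambda>0$ with $\mathbb{E}|Y(s,x_0)|^p\le C|x_0|^p e^{-\lambda s}$ for all $s\ge 0$ and all $x_0$. Since $Y$ is adapted to the filtration generated by $(B,N)$ while $E$ is built from the independent process $D$, the law of $X(t)=Y(E_t)$ is the average of the law of $Y(s)$ against the law $\mu_{E_t}$ of $E_t$ (equivalently its density $f_{E_t}$ when it exists), so that
\begin{equation}
\mathbb{E}|X(t,x_0)|^p=\mathbb{E}\big[\,\mathbb{E}[\,|Y(E_t,x_0)|^p\mid E_t\,]\,\big]=\int_0^\infty \mathbb{E}|Y(s,x_0)|^p\,\mu_{E_t}(ds)\le C|x_0|^p\,\mathbb{E}\big[e^{-\lambda E_t}\big].
\end{equation}

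Setting $v(t):=\mathbb{E}\big[e^{-\lambda E_t}\big]$ produces a function $v:[0,+\infty)\to[0,\infty)$ and a constant $C$, both independent of $x_0$, with $\mathbb{E}|X(t,x_0)|^p\le C|x_0|^p v(t)$, which is exactly the form required by the definition of $p$th moment asymptotic stability. It then remains to check that $v$ decays to $0$. Since $t\mapsto E_t$ is nondecreasing, $v$ is nonincreasing; and because $D$ is a subordinator we have $D(\tau)<\infty$ for every $\tau\ge 0$ a.s., so by \eqref{invstable} $E_t\uparrow\infty$ a.s. as $t\to\infty$, whence $e^{-\lambda E_t}\to 0$ a.s. As $0\le e^{-\lambda E_t}\le 1$, dominated convergence gives $v(t)\to 0$. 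If a quantitative rate is wanted, one can instead use the tail bound $P(E_t>s)\le e^{xt-s\phi(x)}$ (for every $x>0$) established in the proof of the preceding lemma, integrate by parts to bound $\mathbb{E}[e^{-\lambda E_t}]$, and optimize over $x$.

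The main obstacle is the conditioning identity in the display above: one must justify that averaging the law of $Y(\cdot)$ against the law of $E_t$ is legitimate. This is where the independence of $D$ from $(B,N)$ built into the time-changed SDE framework of \cite{keib} is used, together with enough measurability and integrability to invoke Fubini's theorem on the nonnegative integrand $|Y(s,x_0)|^p$. Once this is in place, everything else—extracting the bound, checking the defining inequality, and verifying $v(t)\to 0$—is routine, so I would keep that part brief (as the analogous corollary for the probabilistic notions was handled in \cite{qwa}).
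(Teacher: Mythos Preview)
Your proposal is correct and follows essentially the same route as the paper: use the duality $X(t)=Y(E_t)$, condition on $E_t$ to obtain $\mathbb{E}|X(t)|^p\le C|x_0|^p\,\mathbb{E}[e^{-\lambda E_t}]$, and then argue $E_t\to\infty$ a.s.\ (since $D(\tau)<\infty$ a.s.) so that $v(t)=\mathbb{E}[e^{-\lambda E_t}]\to 0$. If anything, you are slightly more careful than the paper in naming the ingredients---the independence of $E$ from the driving noise that justifies the conditioning identity, and the dominated convergence step---whereas the paper writes the same computation with a density $f_{E_t}$ and leaves these points implicit.
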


\begin{proof}
See Section \ref{appd}.
\end{proof}

\begin{rk}
Existence of pth moment stability of the solution of SDE \eqref{redSDEsim} has been proved by Theorem 3.5.1 in Siakalli \cite{sia}.
\end{rk}


\section{Proofs of main results} \label{appd}
\subsection {Proof of Theorem \ref{tm1}}
\begin{proof}\label{proofoftm1}
Let $\epsilon \in (0,1)$ and $r \in (0, h)$ be arbitrary. By continuity of $V(t_1,t_2,x)$ and the fact $V(t_1,t_2,0)=0$, we can find a $\delta=\delta(\epsilon, r, 0)>0$ such that
\begin{equation}\label{equini}
\frac{1}{\epsilon}\sup_{x\in S_\delta}V(0,0,x_0)\leq \mu(r).
\end{equation}

By \eqref{equini} and condition (2), $\delta<r$. Fix initial value $x_0\in S_\delta$ arbitrarily and define the stopping time
\begin{equation}\label{stoptime}
\tau_r=\inf\{t\geq 0: |X(t,x_0)| \geq r\},
\end{equation}
where $r\leq \frac{h}{2}$, and
\begin{equation}\label{stoptime2}
\begin{aligned}
U_k=&k \land \inf\{ t\geq 0;\Bigg|\int_0^{\tau_r \land t}V_x(s,E_s,X(s-))g(s,E_s,X(s-))dB_{E_s}\Bigg|\geq k\},\\
W_k=&k \land \inf\{ t\geq 0; \Bigg|\int_0^{\tau _r\land t}\int_{|y|<c}\bigg[ V(s,E_s,X(s-)+H(s,E_s,X(s-),y))\\
&\hspace{7cm}-V(s,E_s,X(s-))\bigg] \tilde{N}(dE_s,dy)\Bigg|\geq k\},
\end{aligned}
\end{equation}
for k=1,2,.... It is easy to see that $U_k\rightarrow \infty$ and $W_k\rightarrow \infty$ as $k\rightarrow \infty$. Apply It\^o formula \eqref{itolevy} to $V(t_1,t_2,x)$ associated with SDE \eqref{SDE}, then for any $t\geq 0$,
\begin{equation}
\begin{aligned}
&V(t \land \tau_r  \land U_k \land W_k,E_{t \land \tau_r\land U_k \land W_k},X(t \land \tau_r\land U_k \land W_k))-V(0,0,x_0)\\
=&\int_0^{t \land \tau_r\land U_k \land W_k}L_1V(s,E_s,X(s-))ds\\
+&\int_0^{t \land \tau_r\land U_k \land W_k}L_2V(s,E_s,X(s-))dE_s+\int_0^{t \land \tau_r\land U_k \land W_k}V_x(s,E_s,X(s-))g(s,E_s,X(s-))dB_{E_s}\\
+&\int_0^{t \land \tau_r\land U_k \land W_k}\int_{|y|<c}\bigg[V(s,E_s,X(s-)+H(s,E_s,X(s-),y))-V(s,E_s,X(s-))\bigg]\tilde{N}(dE_s,dy).
\end{aligned}
\end{equation}

By \cite{magpath} and \cite{hhk}, both
\begin{equation}
\int_0^{t \land \tau_r\land U_k \land W_k}V_x(s,E_s,X(s-))g(s,E_s,X(s-))dB_{E_s}
\end{equation}
and
\begin{equation}
\int_0^{t \land \tau_r\land U_k \land W_k}\int_{|y|<c}\bigg[V(s,E_s,X(s-)+H(s,E_s,X(s-),y))-V(s,E_s,X(s-))\bigg]\tilde{N}(dE_s,dy)
\end{equation}
are mean zero martingales.

Taking expectations on both sides, we have
$$\mathbb{E}[V(t \land \tau_r\land U_k \land W_k,E_{t \land \tau_r\land U_k \land W_k},X(t \land \tau_r\land U_k \land W_k))]\leq V(0,0,x_0).$$

Letting $k\rightarrow \infty$,
$$\mathbb{E}[V(t \land \tau_r,E_{t \land \tau_r},X(t \land \tau_r))]\leq V(0,0,x_0).$$

Now, $|X(t \land \tau_r)|<r$ for $t<\tau_r$. For all $w\in\{\tau_r<\infty\}$, $|X(\tau_r)(w)|\leq r+c\leq h$.
Since $V(t_1,t_2,x)\geq \mu(|x|)$ for all $x\in S_h$, we have for all $w\in\{\tau_r<\infty\}$
\begin{equation}
V(\tau_r,E_{\tau_r},X(\tau_r)(w))\geq \mu(|X(\tau_r)(w)|)\geq \mu(r).
\end{equation}

Also,
\begin{equation}
V(0,0,x_0)\geq E[V(t \land \tau_r,E_{t \land \tau_r},X(t \land \tau_r))1_{\{\tau_r<t\}}]\geq E[\mu(r)1_{\{\tau_r<t\}}]=\mu(r)P(\tau_r<t),
\end{equation}

thus, combined with \eqref{equini},
\begin{equation}
P(\tau_r<t)\leq \frac{V(0,0,x_0)}{\mu(r)}\leq \frac{\epsilon \mu(r)}{\mu(r)}=\epsilon.
\end{equation}

Then, letting $t\rightarrow \infty$, we have
\begin{equation}
P(\tau_r<\infty)\leq \epsilon,
\end{equation}

equivalently,
\begin{equation}
P(|X(t,x_0)|<r \ for \ all \ t\geq 0)\geq  1-\epsilon,
\end{equation}

so $X(t,x_0)$ is stochastically stable.
\end{proof}


\subsection {Proof of Theorem \eqref{tm2}}
\begin{proof}
By Theorem \ref{tm1}, trivial solution of \eqref{SDE} is stochastically stable. For any fixed $\epsilon \in (0,1)$, there exists $\delta=\delta(\epsilon)>0$ such that
\begin{equation}
P(|X(t,x_0)|<h)\geq 1-\frac{\epsilon}{5}
\end{equation}
when $x_0\in S_\delta$. Fix $x_0\in S_\delta$ and let $0<\alpha<\beta<|x_0|$ arbitrarily. Define the following stopping times
\begin{equation}
\begin{aligned}
&\tau_h=\inf\{t\geq 0;|X(t,x_0)|>h\}\\
&\tau_\alpha=\inf\{t\geq 0;|X(t,x_0)|<\alpha\}\\
&U_k=k\land \inf\{t\geq 0;\Bigg| \int_0^{t \land \tau_h \land \tau_\alpha}V_x(s,E_s,X(s-))g(s,E_s,X(s-))dB_{E_s}\Bigg|\geq k\},\\
&W_k=k\land \inf\{t\geq 0;\Bigg| \int_0^{t \land \tau_h \land \tau_\alpha}\int_{|y|<c}\big[V_x(s,E_s,X(s-)+h(s,E_s,X(s-),y))\\
&\hspace{8.5cm}-V_x(s,E_s,X(s-))\big]\tilde{N}(dE_s,dy)\Bigg|\geq k\}.
\end{aligned}
\end{equation}

By It\^o's formula \eqref{itolevy}, we have
\begin{equation}
\begin{aligned}
0&\leq \mathbb{E}\big[ V(t \land \tau_h \land \tau_\alpha \land U_k \land W_k,E_{t \land \tau_h \land \tau_\alpha \land U_k \land W_k}, X(t \land \tau_h \land \tau_\alpha \land U_k \land W_k)) \big]\\
&=V(0,0,x_0)+ \mathbb{E}\int_0^{t \land \tau_h \land \tau_\alpha \land U_k \land W_k}L_1V(s,E_s,X(s-))ds\\
&\hspace{7.5cm}+\mathbb{E}\int_0^{t \land \tau_h \land \tau_\alpha \land U_k \land W_k}L_2V(s,E_s,X(s-))dE_s\\
&\leq V(0,0,x_0)-\gamma_1(\alpha)\mathbb{E}[t \land \tau_h \land \tau_\alpha \land U_k \land W_k]-\gamma_2(\alpha)\mathbb{E}[E_{t \land \tau_h \land \tau_\alpha \land U_k \land W_k}].
\end{aligned}
\end{equation}

Letting $k\rightarrow \infty$ and $t\to\infty$, we have
\begin{equation}
\gamma_1(\alpha)\mathbb{E}[ \tau_h \land \tau_\alpha]+\gamma_2(\alpha)\mathbb{E}[E_{  \tau_h \land \tau_\alpha}]\leq V(0,0,x_0),
\end{equation}



By condition (3) and $E_t\rightarrow \infty$ a.s. as $t\rightarrow \infty$, see proof of Theorem \ref{tm4},
we have
\begin{equation}
P(\tau_h \land \tau_\alpha< \infty)=1.
\end{equation}

Since $P(\tau_h=\infty)>1-\frac{\epsilon}{5}$, it follows that $P(\tau_h<\infty)\leq \frac{\epsilon}{5}$, thus
\begin{equation}
1=P(\tau_h \land \tau_\alpha< \infty)\leq P(\tau_h<\infty)+P(\tau_\alpha<\infty)\leq P(\tau_\alpha<\infty)+\frac{\epsilon}{5},
\end{equation}
that's,
\begin{equation}
P(\tau_\alpha<\infty)\geq 1-\frac{\epsilon}{5}.
\end{equation}

Choose $\theta$ sufficiently large for
\begin{equation}
P(\tau_\alpha<\theta)\geq 1-\frac{2\epsilon}{5}.
\end{equation}
Then
\begin{equation}
\begin{aligned}
P(\tau_\alpha<\tau_h\land \theta)&\geq P(\{\tau_\alpha<\theta\}\cap\{\tau_h=\infty\})=P(\tau_\alpha<\theta)-P(\{\tau_\alpha<\theta\}\cap\{\tau_h<\infty\})\\
&\geq P(\tau_\alpha<\theta)-P(\tau_h<\infty)\geq 1-\frac{2\epsilon}{5}-\frac{\epsilon}{5}=1-\frac{3\epsilon}{5}
\end{aligned}
\end{equation}

Now define some stopping times
\begin{eqnarray}\sigma=
\begin{cases}
\tau_\alpha, & if\ \tau_\alpha<\tau_h \land \theta \cr
\infty, & otherwise
\end{cases}
\end{eqnarray}

\begin{equation}
\begin{aligned}
&\tau_\beta=\inf\{t\geq \sigma;|X(t,x_0)|\geq \beta\},\\
&S_i=\inf\{t\geq \sigma;\big|\int_\sigma^{\tau_\beta\land t}V_x(s,E_s,X(s-))g(s,E_s,X(s-))dB_{E_s} \big|\geq i \},\\
&T_i=\inf\{t\geq \sigma;\big|\int_\sigma^{\tau_\beta\land t}\int_{|y|<c}\big[V(s,E_s,X(s-)+h(s,E_s,X(s-),y))\\
&\hspace{7cm}-V(s,E_s,X(s-))\big]\tilde{N}(dE_s,dy) \big|\geq i \}.\\
\end{aligned}
\end{equation}

Again, by It\^o's formula,
\begin{equation}
\begin{aligned}
&\mathbb{E}\bigg[V(t \land \tau_\beta  \land S_i \land T_i, E_{t \land \tau_\beta \land S_i \land T_i},X(t \land \tau_\beta  \land S_i \land T_i))\bigg]\\
\leq &\mathbb{E}\bigg[V(t \land \sigma ,E_{t \land \sigma },X(t \land \sigma  \land))\bigg]+\mathbb{E}\bigg[ \int_{t \land \sigma \land }^{t \land \tau_\beta \land S_i \land T_i}L_1V(s,E_s,X(s-))ds\bigg]\\
&+\mathbb{E}\bigg[ \int_{t \land \sigma \land }^{t \land \tau_\beta \land S_i \land T_i}L_2V(s,E_s,X(s-))dE_s\bigg]\\
\leq&\mathbb{E}\bigg[V(t \land \sigma,E_{t \land \sigma},X(t \land \sigma))\bigg].
\end{aligned}
\end{equation}

Letting $i\rightarrow \infty$,
\begin{equation}
\mathbb{E}\bigg[V(\sigma \land t,E_{\sigma \land t},X(\sigma \land t))\bigg]\geq \mathbb{E}\bigg[V(\tau_\beta \land t,E_{\tau_\beta \land t},X(\tau_\beta \land t))\bigg],
\end{equation}

that is,
\begin{equation}
\mathbb{E}\bigg[V(\sigma \land t,E_{\sigma \land t},X(\sigma \land t))[\mathbbm{1}_{\{\sigma<\infty\}}+\mathbbm{1}_{\{\sigma=\infty\}}]\bigg]\geq \mathbb{E}\bigg[V(\tau_\beta \land t,E_{\tau_\beta \land t},X(\tau_\beta \land t))[\mathbbm{1}_{\{\sigma<\infty\}}+\mathbbm{1}_{\{\sigma=\infty\}}]\bigg].
\end{equation}
For $w\in \{\tau_\alpha\geq \tau_h \land \theta\}$, we have $\sigma=\infty$, then $\tau_\beta=\infty$, thus
\begin{equation}
V(\sigma \land t,E_{\sigma \land t},X(\sigma \land t))=V(t,E_{t}, X(t))
\end{equation}
and
\begin{equation}
V(\tau_\beta \land t,E_{\tau_\beta \land t},X(\tau_\beta \land t))=V(t,E_{t}, X(t))
\end{equation}

Thus,
\begin{equation}\label{a}
\mathbb{E}\bigg[V(\sigma \land t,E_{\sigma \land t},X(\sigma \land t))\mathbbm{1}_{\{\sigma<\infty\}}\bigg]\geq \mathbb{E}\bigg[V(\tau_\beta \land t,E_{\tau_\beta \land t},X(\tau_\beta \land t))\mathbbm{1}_{\{\sigma<\infty\}}\bigg].
\end{equation}

Now, focus on the right hand side of \eqref{a}, by definition of $\tau_\beta$, $\tau_\beta\geq \sigma$, thus \\ $\mathbbm{1}_{\{\sigma<\infty\}}\geq \mathbbm{1}_{\{\tau_\beta<\infty\}}$, then
\begin{equation}\label{b}
\mathbb{E}\bigg[V(\tau_\beta \land t,E_{\tau_\beta \land t},X(\tau_\beta \land t))\mathbbm{1}_{\{\sigma<\infty\}}]\bigg]\geq \mathbb{E}\bigg[V(\tau_\beta \land t,E_{\tau_\beta \land t},X(\tau_\beta \land t))\mathbbm{1}_{\{\tau_\beta<\infty\}}]\bigg].
\end{equation}

Combining \eqref{a} and \eqref{b}, we have
\begin{equation}
\mathbb{E}\bigg[V(\sigma \land t,E_{\sigma \land t},X(\sigma \land t))\mathbbm{1}_{\{\sigma<\infty\}}\bigg]\geq \mathbb{E}\bigg[V(\tau_\beta \land t,E_{\tau_\beta \land t},X(\tau_\beta \land t))\mathbbm{1}_{\{\tau_\beta<\infty\}}]\bigg].
\end{equation}

Since $P(\sigma<\infty)=P(\tau_\alpha<\tau_h\land \theta)$ and $P(\tau_\beta<\infty)\geq P(\{\tau_\beta<\infty\}\cap\{\tau_h=\infty\})$, it follows that
\begin{equation}\label{c}
\mathbb{E}\big[V(\tau_\beta,E_{\tau_\beta},X(\tau_\beta))\mathbbm{1}_{\{\tau_\beta<\infty\}\cap\{\tau_h=\infty\}}\big] \leq \mathbb{E}\big[ V(\tau_\alpha,E_{\tau_\alpha},X(\tau_\alpha)) \mathbbm{1}_{\{\tau_\alpha<\tau_h\land \theta\}} \big].
\end{equation}

By condition (2)
\begin{equation}
0\leq\mu(|x|)\leq V(t_1,t_2,x),
\end{equation}
for all $(t_1,t_2,x) \in \mathbb{R}_+\times \mathbb{R}_+\times \mathbb{R}$, and
$|X(\tau_\beta)|\geq \beta>0$.

Then, for the left hand side of \eqref{c}, we have
\begin{equation}\label{d}
\begin{aligned}
\mathbb{E}\big[V(\tau_\beta,E_{\tau_\beta},X(\tau_\beta))\mathbbm{1}_{\{\tau_\beta<\infty\}\cap\{\tau_h=\infty\}}\big]&\geq \mathbb{E}\big[\mu(|X(\tau_\beta)|)\mathbbm{1}_{\{\tau_\beta<\infty\}\cap\{\tau_h=\infty\}}\big]\\
&\geq\mathbb{E}\big[\mu(\beta)\mathbbm{1}_{\{\tau_\beta<\infty\}\cap\{\tau_h=\infty\}}\big]\\
&=\mu(\beta)\mathbb{E}\big[\mathbbm{1}_{\{\tau_\beta<\infty\}\cap\{\tau_h=\infty\}}\big]\\
&=\mu(\beta)P(\{\tau_\beta<\infty\}\cap\{\tau_h=\infty\}).
\end{aligned}
\end{equation}

Let
\begin{equation}
B_\alpha=\sup_{t_1\times t_2 \times x\in \mathbb{R}_+ \times \mathbb{R}_+ \times \bar{S_\alpha}} V(t_1,t_2,x),
\end{equation}
then $B_\alpha\rightarrow 0$ as $\alpha\rightarrow 0$, that's, $\frac{B_\alpha}{\mu(\beta)}< \frac{\epsilon}{5}$ for some $\alpha$.

For the right hand side of \eqref{c},
\begin{equation}\label{e}
\begin{aligned}
\mathbb{E}\big[ V(\tau_\alpha,E_{\tau_\alpha},X(\tau_\alpha)) \mathbbm{1}_{\{\tau_\alpha<\tau_h\land \theta\}} \big]&\leq \mathbb{E}\big[ B_\alpha \mathbbm{1}_{\{\tau_\alpha<\tau_h\land \theta\}} \big]\\
&=B_\alpha  \mathbb{E}\big[ \mathbbm{1}_{\{\tau_\alpha<\tau_h\land \theta\}} \big]\\
&=B_\alpha P(\tau_\alpha<\tau_h\land \theta).
\end{aligned}
\end{equation}

Combining \eqref{d} and \eqref{e}, we have
\begin{equation}
P(\{\tau_\beta<\infty\}\cap\{\tau_h=\infty\})\mu(\beta)\leq B_\alpha P(\tau_\alpha<\tau_h\land \theta),
\end{equation}

thus
\begin{equation}
P(\{\tau_\beta<\infty\}\cap\{\tau_h=\infty\})\leq \frac{ B_\alpha}{\mu(\beta)}P(\tau_\alpha<\tau_h\land \theta)<\frac{\epsilon}{5}.
\end{equation}

Also,
\begin{equation}
P(\{\tau_\beta<\infty\}\cap\{\tau_h=\infty\})\geq P(\tau_\beta<\infty)-P(\tau_h<\infty)>P(\tau_\beta<\infty)-\frac{\epsilon}{5},
\end{equation}
so,
\begin{equation}
P(\tau_\beta<\infty)<\frac{2\epsilon}{5}.
\end{equation}

Next \begin{equation}
\begin{aligned}
P(\{\sigma<\infty\}\cap\{\tau_\beta=\infty\})&\geq P(\sigma<\infty)-P(\tau_\beta<\infty)\\
&>P(\tau_\alpha<\tau_h\land \theta)-\frac{2\epsilon}{5}\\
&\geq 1- \frac{3\epsilon}{5}-\frac{2\epsilon}{5}\\
&=1- \epsilon.
\end{aligned}
\end{equation}

Hence,

\begin{equation}
P\{\omega;\limsup_{t\rightarrow \infty}\big|X(t,x_0)\big|\leq \beta\}>1-\epsilon.
\end{equation}

Since $\beta$ is arbitrary, we have
\begin{equation}
P\{\omega;\limsup_{t\rightarrow \infty}\big|X(t,x_0)\big|=0\}>1-\epsilon,
\end{equation}

as desired.

\end{proof}


\subsection {Proof of Theorem \eqref{tm4}}
\begin{proof}\label{proofoftm4}
Define a function $Z: \mathbb{R}_+\times \mathbb{R}_+\times \mathbb{R}\rightarrow \mathbb{R}_+$ by
\begin{equation}
Z(t_1,t_2,x)=exp(\alpha_3t_1)V(t_1,t_2,x).
\end{equation}
Fix any $x_0\neq 0$ in $\mathbb{R}$. For each $n \geq |x_0|$, define
$$\tau_n=\inf\{t\geq 0:|X(t)|\geq n\},$$

and
\begin{equation}\label{stoptime2}
\begin{aligned}
U_k=&k \land \inf\{ t\geq 0;\Bigg|\int_0^{\tau_n \land t}V_x(s,E_s,X(s-))g(s,E_s,X(s-))dB_{E_s}\Bigg| \geq k\},\\
W_k=&k \land \inf\{ t\geq 0; \Bigg|\int_0^{\tau _n\land t}\int_{|y|<c}\bigg[ V(s,E_s,X(s-)+h(s,E_s,X(s-),y))\\
&\hspace{7cm}-V(s,E_s,X(s-))\bigg] \tilde{N}(ds,dy)\Bigg|\geq k\},
\end{aligned}
\end{equation}
for k=1,2,.... It is easy to see that $U_k\rightarrow \infty$ and $W_k\rightarrow \infty$ as $k\rightarrow \infty$.

Apply It\^o formula \eqref{itolevy} to $Z(\tau_n\land U_k\land W_k,E_{\tau_n\land U_k\land W_k},X(\tau_n\land U_k\land W_k))$,  then we have
\begin{equation}
\begin{aligned}
&Z(t \land \tau_n\land U_k\land W_k,E_{t \land \tau_n\land U_k\land W_k},X(t \land \tau_n\land U_k\land W_k))-Z(0,0,x_0)\\
=&\int_0^{t \land \tau_n\land U_k\land W_k}\exp(\alpha_3s)\bigg[\alpha_3 V(s,E_s,X(s-)) + V_s(s,E_s,X(s-)\bigg])ds\\
&+\int_0^{t \land \tau_n\land U_k\land W_k}\exp(\alpha_3s)V_{E_s}(s,E_s,X(s-))dE_s\\
&+\int_0^{t \land \tau_n\land U_k\land W_k}\exp(\alpha_3s)V_x(s,E_s,X(s-))\bigg[f(s,E_s,X(s-))dt\\
&\hspace{7cm}+k(s,E_s,X(s-))dE_t+g(s,E_s,X(s-))dB_{E_t}\bigg]\\
&+\frac{1}{2}\int_0^{t \land \tau_n\land U_k\land W_k}\exp(\alpha_3s)V_{xx}(s,E_s,X(s-))g^2(s,E_s,X(s-))dE_s\\
&+\int_0^{t \land \tau_n\land U_k\land W_k}\int_{|y|<c}\exp(\alpha_3s)\bigg[V(s,E_s,X(s-)+h(s,E_s,X(s-),y))\\
&\hspace{9.5cm}-V(s,E_s,X(s-))\bigg]\tilde{N}(dE_s,dy)\\
&+\int_0^{t \land \tau_n\land U_k\land W_k}\int_{|y|<c}\exp(\alpha_3s)\bigg[V(s,E_s,X(s-)+h(s,E_s,X(s-),y))-V(s,E_s,X(s-))\\
&\hspace{6.5cm}-V_x(s,E_s,X(s-))h(s,E_s,X(s-),y)\bigg]\nu(dy)dE_s\\
\end{aligned}
\end{equation}

\begin{equation*}
\begin{aligned}
=&\int_0^{t \land \tau_n\land U_k\land W_k}\exp(\alpha_3s)\bigg[\alpha_3 V(s,E_s,X(s-)) + V_s(s,E_s,X(s-))\\
&\hspace{8cm}+V_x(s,E_s,X(s-))f(s,E_s,X(s-))\bigg])ds\\
&+\int_0^{t \land \tau_n\land U_k\land W_k}\exp(\alpha_3s)\Bigg[V_{E_s}(s,E_s,X(s-))+V_x(s,E_s,X(s-))k(s,E_s,X(s-))\\
&+\frac{1}{2}V_{xx}(s,E_s,X(s-))g^2(s,E_s,X(s-))+\int_{|y|<c}\bigg[V(s,E_s,X(s-)+h(s,E_s,X(s-),y))\\
&\hspace{3cm}-V(s,E_s,X(s-))-V_x(s,E_s,X(s-))h(s,E_s,X(s-),y)\bigg]\nu(dy)\Bigg]dE_s\\
&+\int_0^{t \land \tau_n\land U_k\land W_k}\exp(\alpha_3s)g(s,E_s,X(s-))V_x(s,E_s,X(s-))dB_{E_s}\\
&+\int_0^{t \land \tau_n\land U_k\land W_k}\int_{|y|<c}\exp(\alpha_3s)\bigg[V(s,E_s,X(s-)+h(s,E_s,X(s-),y))\\
&\hspace{9.5cm}-V(s,E_s,X(s-))\bigg]\tilde{N}(dE_s,dy)\\
\end{aligned}
\end{equation*}
\begin{equation*}
\begin{aligned}
=&\int_0^{t \land \tau_n\land U_k\land W_k}\exp(\alpha_3s)\bigg[\alpha_3 V(s,E_s,X(s-))+L_1V(s,E_s,X(s-)\bigg]ds\\
&+\int_0^{t \land \tau_n\land U_k\land W_k}\exp(\alpha_3s)L_2V(s,E_s,X(s-))dE_s\\
&+\int_0^{t \land \tau_n\land U_k\land W_k}\exp(\alpha_3s)g(s,E_s,X(s-))V_x(s,E_s,X(s-))dB_{E_s}\\
&+\int_0^{t \land \tau_n\land U_k\land W_k}\int_{|y|<c}\exp(\alpha_3s)\bigg[V(s,E_s,X(s-)+h(s,E_s,X(s-),y))\\
&\hspace{9.5cm}-V(s,E_s,X(s-))\bigg]\tilde{N}(dE_s,dy)\\
\end{aligned}
\end{equation*}

By similar ideas as in the proof of \eqref{proofoftm1}, we have that
$$\int_0^{t \land \tau_n\land U_k\land W_k}\exp(\alpha_3s)g(s,E_s,X(s-))V_x(s,E_s,X(s-))dB_{E_s}$$
and
$$\int_0^{t \land \tau_n\land U_k\land W_k}\int_{|y|<c}\exp(\alpha_3s)\bigg[V(s,E_s,X(s-)+h(s,E_s,X(s-),y))-V(s,E_s,X(s-))\bigg]\tilde{N}(dE_s,dy)$$
are mean zero martingales.
Taking expectations on both sides, we have
\begin{equation}
\begin{aligned}
&\mathbb{E}[\exp(\alpha_3(t \land \tau_n\land U_k\land W_k))V(t \land \tau_n\land U_k\land W_k,E_{t \land \tau_n\land U_k\land W_k},X(t \land \tau_n\land U_k\land W_k))]\\
\leq &\mathbb{E}\int_0^{t \land \tau_n\land U_k\land W_k}\exp(\alpha_3s)\bigg[\alpha_3 V(s,E_s,X(s-))+L_1V(s,E_s,X(s-)\bigg]ds+V(0,0,x_0) \\
\leq & V(0,0,x_0).
\end{aligned}
\end{equation}

Letting $k\rightarrow \infty$ and  $n\rightarrow \infty$, $\mathbb{E}[\exp(\alpha_3t)V(t,E_t,X(t))]\leq V(0,0,x_0)$.
By condition (2),
\begin{equation}
\alpha_1 |X(t)|^p\leq V(t,E_t,X(t)),
\end{equation}
then
\begin{equation}
\alpha_1 \mathbb{E}(\exp(\alpha_3t)|X(t)|^p)\leq \mathbb{E}(\exp(\alpha_3s)V(t,E_t,X(t)))\leq V(0,0,x_0)\leq \alpha_2|x_0|^p,
\end{equation}
that's
\begin{equation}
\mathbb{E}(|X(t)|^p)\leq \frac{\alpha_2}{\alpha_1}\exp(-\alpha_3t)|x_0|^p,
\end{equation}
as desired.
\end{proof}

\subsection{Proof of Corollary \ref{tm5}}
\begin{proof}
If $Y(t)$ satisfies SDE \eqref{redSDEsim}, by Theorem 4.2 in \cite{keib}, $X(t)=Y(E_t)$ satisfies \eqref{redSDE}.\\
Since $Y(t)$ is pth moment exponentially stable, there exist two positive constants $\lambda$ and C such that
\begin{equation}
\mathbb{E}[|X(t)|^p]\leq C|x_0|^p\exp(-\lambda t),\ \forall t\geq 0,\ \forall x_0\in \mathbb{R},\ p>0,
\end{equation}
then
\begin{equation}
\begin{aligned}
\mathbb{E}[|Y(t)|^p]&=\mathbb{E}[|X(E_t)|^p]\\
&=\int_0^\infty \mathbb{E}[|X(s)|^p\exp(\lambda s)\exp(-\lambda s)|E_t=s]f_{E_t}(s)ds\\
&=\int_0^\infty \mathbb{E}[|X(s)|^p\exp(\lambda s)|E_t=s]\exp(-\lambda s)f_{E_t}(s)ds\\
&\leq \int_0^\infty C|x_0|^p\exp(-\lambda s)f_{E_t}(s)ds\\
&=C|x_0|^p \mathbb{E}[\exp(-\lambda E_t)].\\
\end{aligned}
\end{equation}
Since $E_t$ is nondecreasing and $E_0=0$, by definition of $E_t$, we claim that $\lim_{t\rightarrow \infty}E_t=\infty$ a.s.. Assume to the contrary that there exists $B>0$ such that $E_t<B$ for all $t>0$ with positive probability, then $D(B)>t$ for all $t>0$  with positive probability. However, by Lemma 12.1 of \cite{bi}, $D(B)$ is bounded, which results in a contradiction. Consequently, $\mathbb{E}[\exp(-\lambda E_t)]\rightarrow 0$ as $t\rightarrow \infty$,
as desired.
\end{proof}


\begin{thebibliography}{99}

\bibitem{apsia} D. Applebaum, M. Siakalli, Asymptotic stability of stochastic differential equations driven by L\'evy noise,Journal of Applied  Probability 46 1116-1129 (2009).

\bibitem{bi} P. Billingsley, Weak Convergence of Probability Measures,2nd edition, 1999.

\bibitem{keia} M. G. Hahn, K. Kobayshi, S. Umarov, SDEs driven by a time-changed L\'evy process and their associated time-fractional order pseudo-differential equations, Proceedings of the American mathematical Society 139 (2), 691-705.

\bibitem{dope} D. Henderson, P. Plaschko, Stochastic Differential Equations in Science and Engineering, World Scientific Publishing, 2006.

\bibitem{jijo} J.E. Hilliard, R. Jorge, Jump Processes in Commodity Futures Prices and Options Pricing, American Journal of Agricultural Economics, 1999, Volume 81, No. 2, pp 273-286.

\bibitem{such} S.K. Jha, C.J. Langmead, Exploring behaviors of stochastic differential equation models of biological systems using change of measures, BMC Bioinformatics. 2012; 13(Suppl 5): S8.

\bibitem{keib} K. Kobayashi, Stochastic Calculus for a Time-Changed Semimartingale and the Associated Stochastic Differential Equations, Journal of Theoretical Probability, September 2011, Volume 24, Issue 3, pp 789-820.

\bibitem{hhk} H.H. Kuo, Introduction to Stochastic Integration, Springer, 2006.

\bibitem{foondun} W. Liu, M. Foondun, X. Mao, Mean Square Polynomial Stability of Numerical Solutions to a Class of Stochastic Differential Equations, Statistics \& Probability Letters 92, 173-182.

\bibitem{magpath} M. Magdziarz, Path Properties of Subdiffusion - A Martingale Approach, Stochastic Models, May 2010.

\bibitem{maotext} X. Mao, Stochastic differential equations and applications, second edition, Woodhead Publishing, 2007.

\bibitem{maer} M.M. Meerschaert, E. Nane and P. Vellaisamy, Fractional Cauchy problems on bounded domains, Ann. Probab., 37
(2009), 979-1007.

\bibitem{jber}   J.B. Mijena and E. Nane. Space-time fractional stochastic partial differential equations. Stochastic Process. Appl. 125, (2015) 3301-3326.

\bibitem{erni} E. Nane and Y. Ni. Stochastic Solution of Fractional Fokker-Planck Equations with Space-Time-Dependent Coefficients. Journal of Mathematical Analysis and Applications, 2016.

\bibitem{sia} M. Siakalli, Stability properties of stochastic differential equations driven by L\'evy noise, 2009.

\bibitem{sob} K.Sobczyk, Stochastic Differential Equations: With Applications to Physics and Engineering, Springer Netherlands, 1991.

\bibitem{qwa} Q. Wu, Stability of stochastic differential equation with respect to time-changed Brownian motion, 2016.

\bibitem{qwb} Q. Wu, Stability analysis for a class of nonlinear time-changed syatems, 2016.

\end{thebibliography}
\end{document}